  \def\<{{\langle}} 
  \def\>{{\rangle}} 
  \def\la{{\triangleright}} 
  \def\eps{\varepsilon}
  \def\note#1{{}}
  \def\note#1{}
  \def\lhom#1#2#3{{}{\sb{#1}{\rm Hom}(#2,#3)}} 
  \def\rhom#1#2#3{{{\rm Hom}\sb{#1}(#2,#3)}}
  \def\rend#1#2{{{\rm End}\sb{#1}(#2)}}
  \def\R{\mathbb{R}}
  \def\beq{\begin{equation}} 
  \def\eeq{\end{equation}}
  \def\id{\mathrm{id}}
  \def\ot{{\otimes}}
  \def\tsi{\tilde{\sigma}}
  \def\hsi{\hat{\sigma}}
   \def\bsi{\bar{\sigma}}
    \def\tsi{\tilde{\sigma}}
  \def\coker{\mathrm{coker}}
    \def\oan#1{\Omega^{#1} (A)}
\def\1{\mathbb{I}}
\def\k{\Bbbk}
  \newcounter{zlist} 
  \newenvironment{zlist}{\begin{list}{(\arabic{zlist})}{ 
  \usecounter{zlist}\leftmargin2.5em\labelwidth2em\labelsep0.5em 
  \topsep0.6ex
  \parsep0.3ex plus0.2ex minus0.1ex}}{\end{list}}
  \newcounter{blist}
  \newcounter{rlist}
\def\stac#1{\raise-.2cm\hbox{$\stackrel{\displaystyle\otimes}{\scriptscriptstyle{#1}}$}}
\def\cten#1{\raise-.2cm\hbox{$\stackrel{\displaystyle\widehat{\otimes}}
{\scriptscriptstyle{#1}}$}}
  \def\Label#1{\label{#1}\ifmmode\llap{[#1] }\else 
  \marginpar{\smash{\hbox{\tiny [#1]}}}\fi} 
  \def\Label{\label}
  \newtheorem{proposition}{Proposition}[section]
  \newtheorem{lemma}[proposition]{Lemma} 
  \newtheorem{corollary}[proposition]{Corollary} 
  \newtheorem{theorem}[proposition]{Theorem} 
  \theoremstyle{definition} 
  \newtheorem{definition}[proposition]{Definition}
  \theoremstyle{remark}
\begin{document} 

 \title{Divergences on projective modules and non-commutative integrals} 
 \author{Tomasz Brzezi\'nski}
 \address{ Department of Mathematics, Swansea University, 
  Singleton Park, \newline\indent  Swansea SA2 8PP, U.K.} 
  \email{T.Brzezinski@swansea.ac.uk}   
     \date{\today} 
  \subjclass[2000]{58B32; 16W25} 

  \begin{abstract} A method of constructing (finitely generated and projective) right module structure on a finitely generated projective left module over an algebra is presented. This leads to a construction of a first order differential calculus on such a module which admits a hom-connection or a divergence. Properties of integrals associated to this divergence are studied, in particular the formula of integration by parts is derived. Specific examples include inner calculi on a noncommutative algebra, the Berezin integral on  the supercircle and  integrals on Hopf algebras. 
 \end{abstract} 
  \maketitle

\section{Introduction}
The notion of a {\em hom-connection} or a {\em divergence} over a non-commutative algebra $A$ was introduced in \cite{Brz:con}. In \cite{BrzElK:int} a construction of differential calculi which admit hom-connections was presented. This construction is based on the use of {\em twisted multi-derivations}, and the constructed first-order calculus $\oan 1$ is free as  a left and right $A$-module. Since $\oan 1$ should be understood as a module of sections on the cotangent bundle over a manifold represented by $A$, the construction presented in \cite{BrzElK:int} corresponds to parallelisable manifolds or to an algebra of functions on a local chart. In general, however, one should expect $\oan 1$  to be a finitely generated and projective module over $A$ (thus corresponding to sections of a non-trivial vector bundle by the Serre-Swan theorem). The first aim  of these notes is to extend the construction in \cite{BrzElK:int} to finitely generated and projective modules.

It has been argued in \cite{BrzElK:int} that the canonical map defining the cokernel of a hom-connection should be interpreted as an integral. The motivation for this intepretation comes from \cite{BerLei:int}, where the integral on supermanifolds or the Berezin integral is shown to originate from a map which can be intepreted as a hom-connection termed a {\em right connection} in \cite{Man:gau}. In the second part of these notes we show that the integral associated to hom-connections constructed in \cite{BrzElK:int} allows one to use the integration by parts method.  This is further indication that indeed the cokernel of a hom-connection should be understood as a non-commutative integral. The final aim of these notes is to present examples of integrals and applications of the integration by parts formula. These include integrals associated to an inner calculus, properties of integrals on Hopf algebras, and the integral on the algebra of functions on a supercircle, which is explicitly shown to coincide with the Berezin integral as expected. 

The basic notion studied in these notes is that of a {\em hom-connection} or a {\em divergence} which we recall presently. Let $A$ be an algebra over a field $\k$. A first order differential calculus over $A$ is an  $A$-bimodule $\oan 1$ together with a map $d: A\to \oan 1$ which satisfies the Leibniz rule (i.e.\ $d$ is an $\oan 1$-valued derivation on $A$). A {\em divergence} on $A$ is a $\k$-linear map $\nabla: \rhom A {\oan 1} A \to A$ such that, for all $f\in \rhom A {\oan 1} A$ and $a\in A$,
\begin{displaymath}
\nabla (f a) = \nabla(f) a + f(da), 
\end{displaymath}
where $fa$ is a right $A$-module map $\oan 1\to A$ given by $\omega\mapsto f(a\omega)$. Here and below, given an $A$-bimodule $M$, $\rhom A M A$ denotes the vector space of all right $A$-module maps $M\to A$ and $\lhom A MA$ that of left $A$-module maps. In terminology of \cite{Brz:con} a divergence $\nabla$ is the same as a {\em hom-connection} $(\nabla, A)$ with respect to differential graded algebra with degree-one part $\oan 1$.

\section{Divergences on projective modules}\label{sec.div.proj}
\setcounter{equation}{0}
By a {\em right twisted multi-derivation} in an algebra $A$ we mean a pair $(\partial, \sigma)$, where $\sigma: A\to M_n(A)$  ($M_n(A)$ is the algebra of $n\times n$ matrices with entries from $A$) and $\partial :A\to A^n$ are $\k $-linear maps such that, 
for all $a\in A$, $b\in B$,
\begin{equation}\label{eq.partial}
\partial(ab) = \partial(a)\sigma(b) + a\partial(b).
\end{equation}
Here $A^n$ is understood as an $(A$-$M_n(A))$-bimodule. 
Thus, writing $\sigma(a) = (\sigma_{ij}(a))_{i,j=1}^n$ and $\partial(a) = (\partial_i(a))_{i=1}^n$, 
 \eqref{eq.partial} is equivalent to the following $n$ equations
$$
\partial_i(ab) = \sum_j \partial_j(a)\sigma_{ji}(b) + a\partial_i(b), \qquad i=1,2,\ldots, n.
$$
In contrast to \cite{BrzElK:int} we do not require $\sigma$ be an algebra map.

The definition of a twisted multi-derivation $(\partial, \sigma)$ above is designed in such a way as to lead to a first order differential calculus. 
\begin{lemma}\label{lem.dif.calc}
Let $(\partial: A\to A^n, \sigma)$ be a right twisted multi-derivation, and let $M$ be an $A$-bimodule in which there are elements $\omega_1,\omega_2, \ldots, \omega_n$ such that 
\begin{equation}\label{eq.dif.rel0}
 \omega_i a = \sum _j \sigma_{ij}(a) \omega_j ,  \qquad i=1,2,\ldots, n.
\end{equation}
Then $\oan 1 = M$ is a first order differential calculus on $A$ with exterior differential
 \begin{equation}\label{eq.dif.d}
da = \sum_i \partial_i(a)\omega_i ,
\end{equation}
for all $a\in A$. 
\end{lemma}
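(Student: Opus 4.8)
The plan is to note that, since $M$ already carries the structure of an $A$-bimodule by hypothesis, the only substantive thing to check is that the map $d$ defined by \eqref{eq.dif.d} is a $\k$-linear derivation into $\oan 1 = M$. Well-definedness is automatic, as $d$ is given by an explicit formula with no quotient involved, and $\k$-linearity of $d$ is immediate from the $\k$-linearity of each component $\partial_i$ together with $\k$-linearity of the left $A$-action $c\mapsto c\,\omega_i$ on $M$. Thus the whole statement reduces to verifying the Leibniz rule $d(ab)=(da)b+a(db)$ for all $a,b\in A$.

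First I would expand the left-hand side using the componentwise form of the twisted Leibniz rule \eqref{eq.partial}:
\begin{equation*}
d(ab)=\sum_i\partial_i(ab)\,\omega_i=\sum_{i,j}\partial_j(a)\sigma_{ji}(b)\,\omega_i+\sum_i a\,\partial_i(b)\,\omega_i.
\end{equation*}
The second sum is already in the desired shape: pulling the left factor $a$ out of the $A$-action on $M$ gives $a\sum_i\partial_i(b)\,\omega_i=a(db)$, using only that $M$ is a left $A$-module. It therefore remains to identify the first, twisted sum with $(da)b$.

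The key step is to collapse the twisted sum using the commutation relations \eqref{eq.dif.rel0}. Reading \eqref{eq.dif.rel0} with outer index $j$ and with $b$ in place of $a$ gives $\sum_i\sigma_{ji}(b)\,\omega_i=\omega_j b$, whence
\begin{equation*}
\sum_{i,j}\partial_j(a)\sigma_{ji}(b)\,\omega_i=\sum_j\partial_j(a)\Bigl(\sum_i\sigma_{ji}(b)\,\omega_i\Bigr)=\sum_j\partial_j(a)\,(\omega_j b)=\Bigl(\sum_j\partial_j(a)\,\omega_j\Bigr)b=(da)\,b,
\end{equation*}
where the penultimate equality invokes the compatibility of the left and right $A$-actions on the bimodule $M$, i.e.\ the associativity $\partial_j(a)\cdot(\omega_j\cdot b)=(\partial_j(a)\cdot\omega_j)\cdot b$. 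Combining the two sums yields $d(ab)=(da)b+a(db)$, as required.

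I do not expect any serious obstacle: the argument is a direct index computation. The only point requiring care — and the conceptual heart of the definition — is that the twisting encoded by $\sigma$ in \eqref{eq.partial} is matched \emph{exactly} by the twisting in \eqref{eq.dif.rel0}, so that the extra term produced by $\sigma$ in the Leibniz rule for $\partial$ is precisely what reassembles, through the bimodule relation, into the right action $(da)b$. This interlocking of the two defining relations is what makes $d$ a genuine derivation rather than a merely twisted one, and it is the feature one must highlight in the write-up.
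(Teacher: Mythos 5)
Your proof is correct and follows exactly the route the paper intends: the paper's one-line proof says the Leibniz rule ``follows immediately from the twisted derivation property of $\partial$ and relations \eqref{eq.dif.rel0}'', and your computation is precisely the spelled-out version of that, using \eqref{eq.partial} to expand $d(ab)$ and \eqref{eq.dif.rel0} to reassemble the $\sigma$-twisted sum into $(da)b$. Nothing further is needed.
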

\begin{proof}
That $d$ satisfies the Leibniz rule follows immediately from the twisted derivation property of $\partial$ and relations \eqref{eq.dif.rel0}. 
\end{proof}

In view of Lemma~\ref{lem.dif.calc} a right twisted multi-derivation $(\partial,\sigma)$ can be understood as a derivation in the bimodule $M$ in which relations \eqref{eq.dif.rel0}  hold. We now proceed to construct a class of such modules. 

A map $\sigma : A \to M_n(A)$ can be  viewed as an element of $M_n(\rend \k  A)$ . We write $\bullet$ for the product in $M_n(\rend \k  A)$ and  $\sigma^T$ for the transpose of $\sigma$ in $M_n(\rend \k  A)$. Furthermore, an element $\pi$ of $M_n(A)$  is understood as an element of  $M_n(\rend \k  A)$ by right multiplication, i.e.
$$
\pi_{ij} : a \mapsto a\pi_{ij}.
$$
In concordance with this  $\1$ denotes the identity matrix both in $M_n(A)$ and  in $M_n(\rend \k  A)$. 

\begin{definition}\label{def.proj.system}
For a $\k$-algebra $A$, let $\pi\in M_n(A)$ be an idempotent and let $\sigma, \tsi: A\to M_n(A)$ be  $\k$-linear maps such that, for all $a,b\in A$,
\begin{equation}\label{tsi}
\tsi(b) \sigma(a)\pi = \tsi(ba)\pi, \qquad
\tsi(1) = \pi, 
\end{equation}
and
\begin{equation}\label{pi}
\sigma(a)\pi = \pi \tsi(a),
\end{equation}
(all equations in $M_n(A)$).
The triple $(\pi,\sigma,\tsi)$ is called a {\em pre-projective system} on $A$. 

A pre-projective system $(\pi,\sigma,\tsi)$ is called a {\em projective system} 
provided there exists an algebra map $\bsi: A \to M_n(A)$  such that
\begin{equation}\label{bar.tsi}
\bsi \bullet \tsi^T = \1, \qquad \sigma^T\bullet\bsi = \pi \, ,
\end{equation}
in $M_n(\rend \k  A)$. 
A projective system is denoted by $(\pi,\sigma,\tsi;\bsi)$.
\end{definition}
\begin{lemma}\label{lem.proj.system}
Let $(\pi,\sigma,\tsi)$ be a pre-projective system on $A$. Then, for all $a\in A$,
\begin{equation}\label{pro1}
\pi \sigma(a) \pi = \tsi(a)\pi,
\qquad 
\pi \tsi(a)  = \tsi(a)\pi,
\end{equation}
\begin{equation}\label{pro3}
\pi \tsi(a) \pi = \tsi(a)\pi,
\qquad
\sigma(a) \pi = \tsi(a)\pi
\end{equation}
and
\begin{equation}\label{pro5}
\pi \sigma(a) \pi = \sigma(a)\pi .
\end{equation}
Furthermore, for all $a,b\in A$,
\begin{equation}\label{tsi.alg}
\tsi(b) \tsi(a)\pi = \tsi(ba)\pi =  \tsi(b) \pi \tsi(a),
\end{equation}
and 
\begin{equation}\label{sig.alg}
\sigma(b) \sigma(a)\pi = \sigma(ba)\pi , \qquad \sigma(1)\pi = \pi.
\end{equation}
\end{lemma}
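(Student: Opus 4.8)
The plan is to derive all six displayed identities by purely algebraic manipulation from the two defining relations \eqref{tsi} and \eqref{pi} together with the idempotency $\pi^2=\pi$, arranging the derivations in an order so that each identity may invoke its predecessors. The observation that unlocks everything is that setting $b=1$ in the first relation of \eqref{tsi} and using $\tsi(1)=\pi$ gives at once the first identity of \eqref{pro1}, namely $\pi\sigma(a)\pi = \tsi(a)\pi$.

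Next I would establish the commutation $\pi\tsi(a)=\tsi(a)\pi$ (the second part of \eqref{pro1}): multiplying \eqref{pi} on the left by $\pi$ and using $\pi^2=\pi$ yields $\pi\sigma(a)\pi = \pi\tsi(a)$, and comparing this with the identity just obtained forces $\pi\tsi(a)=\tsi(a)\pi$. From this commutation the remaining lower identities fall out immediately: multiplying it on the right by $\pi$ gives the first part of \eqref{pro3}, $\pi\tsi(a)\pi=\tsi(a)\pi$; substituting it into \eqref{pi} gives the second part, $\sigma(a)\pi=\tsi(a)\pi$; and \eqref{pro5} then follows because both $\pi\sigma(a)\pi$ and $\sigma(a)\pi$ have now been shown to equal $\tsi(a)\pi$.

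For the algebra-like relations I would proceed as follows. The pair \eqref{tsi.alg} follows by substituting $\sigma(a)\pi=\tsi(a)\pi$ (the second part of \eqref{pro3}) into \eqref{tsi} for the first equality, and by substituting $\pi\tsi(a)=\sigma(a)\pi$ (that is, \eqref{pi}) into \eqref{tsi} for the second. Finally \eqref{sig.alg} is the most involved: the normalisation $\sigma(1)\pi=\pi$ is the identity $\sigma(a)\pi=\tsi(a)\pi$ at $a=1$ combined with $\tsi(1)=\pi$ and idempotency, while the multiplicativity $\sigma(b)\sigma(a)\pi=\sigma(ba)\pi$ requires chaining several earlier results — replace $\sigma(a)\pi$ by $\tsi(a)\pi=\pi\tsi(a)\pi$, use \eqref{pi} to turn $\sigma(b)\pi$ into $\pi\tsi(b)$, apply the first equality of \eqref{tsi.alg}, and collapse the surrounding copies of $\pi$ by \eqref{pro3}. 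This last computation is the only step that is not a one-line substitution, so it is the one I would track most carefully; but since every ingredient is already in hand, there is no genuine obstacle here, only the bookkeeping of proving the identities in the right order.
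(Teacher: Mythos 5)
Your proposal is correct and follows essentially the same route as the paper: the same evaluation of \eqref{tsi} at $b=1$ to start, the same order of deduction through \eqref{pro1}--\eqref{pro5} and \eqref{tsi.alg}, and your chain $\sigma(b)\sigma(a)\pi = \sigma(b)\pi\tsi(a)\pi = \pi\tsi(b)\tsi(a)\pi = \pi\tsi(ba)\pi = \sigma(ba)\pi$ is exactly the ``straightforward calculation'' via \eqref{pi}, \eqref{pro3} and \eqref{tsi.alg} that the paper leaves implicit. The only (harmless) variation is that for the second equality of \eqref{tsi.alg} you substitute \eqref{pi} directly where the paper cites the second identity of \eqref{pro1}.
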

\begin{proof}
The first of equalities \eqref{pro1} follows by evaluating of \eqref{tsi} at $b=1$. The second of equalities  \eqref{pro1} is a consequence of the first one, equation \eqref{pi} and the idempotent property of $\pi$.  Multiplying the second of equalities  \eqref{pro1} by $\pi$ and using the idempotent property of $\pi$ one obtains the fist of equalities \eqref{pro3}. Then the use of \eqref{pi} yields the second of \eqref{pro3}. Next, \eqref{pro5} is a consequence of the first of  equalities \eqref{pro1} and the second one of \eqref{pro3}. The equalities \eqref{tsi.alg} are a simple consequence of the second of \eqref{pro3} and the first of \eqref{tsi}, and of the second of \eqref{pro1}. Finally the second of \eqref{sig.alg} follows by the normalisation of $\tsi$ and the idempotent property of $\pi$, while the first of \eqref{sig.alg} can be checked by a straightforward calculation that uses for example \eqref{pi}, the second of \eqref{pro3} and \eqref{tsi.alg}.
\end{proof}

In view of the above properties and, in particular, in view of the fact that $\tsi$ coincides with $\sigma$ when multiplied by $\pi$, the map $\tsi$ plays an auxiliary role; without losing much generality, one might assume that $\tsi =\sigma$ in the pre-projective system from the onset.

\begin{proposition}\label{prop.bimod}
Let $M$ be a finitely generated projective left $A$-module with dual basis $\omega_i\in M$, $\eta_i\in \lhom A MA$. Write $\pi = (\pi_{ij} = \eta_j(\omega_i))\in M_n(A)$ for the idempotent associated to this dual basis. Assume that there exist $\k$-linear maps $\sigma,\tsi: A\to M_n(A)$, such that $(\pi,\sigma,\tsi)$ is a pre-projective system on $A$. Then:
\begin{zlist} 
\item $M$ is an $A$-bimodule with the right multiplication
\begin{equation}\label{right}
ma := \sum_{i,j} \eta_i(m) \tsi_{ij}(a) \omega_j = \sum_{i,j} \eta_i(m) \sigma_{ij}(a) \omega_j ,
\end{equation}
for all $a\in A$, $m\in M$. In particular, for all $a\in A$, 
\begin{equation}\label{eq.dif.rel}
 \omega_i a = \sum _j \sigma_{ij}(a) \omega_j= \sum _j \tsi_{ij}(a) \omega_j ,  \qquad i=1,2,\ldots, n.
\end{equation}
\item If there exists an algebra map $\bsi: A\to M_n(A)$ such that $(\pi,\sigma,\tsi; \bsi)$ is a projective system, then 
\begin{equation} \label{rel.dif.bar}
a\omega_i = \sum_{j} \omega_j\bsi_{ji}(a), \qquad \mbox{for all} \quad a\in A, i=1,2,\ldots n,
\end{equation}
and $M$ is a finitely generated and projective right $A$-module. 
\end{zlist}
\end{proposition}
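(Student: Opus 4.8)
The statement combines two assertions: the explicit left-multiplication formula \eqref{rel.dif.bar}, and the claim that $M$ is finitely generated and projective as a right $A$-module. The plan is to prove \eqref{rel.dif.bar} first and then exploit it, together with the algebra map $\bsi$, to build a right dual basis. Throughout I would realise $M$ as a retract of $A^n$ through the two left-linear maps $p\colon A^n\to M$, $(a_i)\mapsto\sum_i a_i\omega_i$, and $e\colon M\to A^n$, $m\mapsto(\eta_i(m))_i$. The dual-basis property gives $p\circ e=\id_M$, while a direct computation gives $\eta_i(p(a))=\sum_j a_j\pi_{ji}$, i.e.\ $e\circ p$ is right multiplication by $\pi$. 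In particular $e$ is injective, so an identity of elements of $M$ may always be checked after applying $e$, in $A^n$; from part (1) I may freely use the right action \eqref{right} and the relations \eqref{eq.dif.rel}, and from Lemma~\ref{lem.proj.system} the consequences \eqref{pro5} and \eqref{sig.alg}.

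To prove \eqref{rel.dif.bar} I would expand its right-hand side by \eqref{eq.dif.rel}, writing $\sum_j\omega_j\bsi_{ji}(a)=\sum_{j,l}\sigma_{jl}(\bsi_{ji}(a))\,\omega_l$. The inner coefficient $\sum_j\sigma_{jl}(\bsi_{ji}(a))$ is, by the very definitions of the product $\bullet$ and the transpose in $M_n(\rend \k A)$, the relevant entry of $(\sigma^T\bullet\bsi)$ evaluated at $a$; the second relation in \eqref{bar.tsi}, $\sigma^T\bullet\bsi=\pi$, then identifies it with a right multiple of a single entry of $\pi$. Summing against the $\omega_l$ and using idempotency $\pi^2=\pi$ together with the dual-basis expansion $\omega_i=\sum_l\pi_{il}\omega_l$ collapses the whole expression to $a\omega_i$. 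The safest way to run this is to apply $e$ to both sides and reduce the claim to the matrix identity $\pi^2=\pi$, since this sidesteps any illegitimate comparison of coefficients (the $\omega_l$ are only generators, not a basis).

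With \eqref{rel.dif.bar} established, I would define $\k$-linear maps $\xi_i\colon M\to A$ by $\xi_i(m)=\sum_k\bsi_{ik}(\eta_k(m))$. Applying \eqref{rel.dif.bar} termwise to $m=\sum_k\eta_k(m)\,\omega_k$ yields the reconstruction $m=\sum_i\omega_i\,\xi_i(m)$, so the $\omega_i$ generate $M$ as a right module and the $\xi_i$ are the candidate dual functionals. The real content is to show that each $\xi_i$ is right $A$-linear. For this I would expand $\eta_k(ma)$ via \eqref{right} in its $\tsi$-form, then use that $\bsi$ is an algebra homomorphism to distribute $\bsi_{ik}$ over the resulting triple product; what remains are two contractions, namely the normalisation $\sum_k\bsi_{ik}(\pi_{jk})=\delta_{ij}$ and the first relation $\bsi\bullet\tsi^T=\1$ of \eqref{bar.tsi} (both obtained from $\tsi(1)=\pi$), and these collapse $\xi_i(ma)$ to $\xi_i(m)\,a$. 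The dual basis lemma then shows that $M$ is finitely generated and projective as a right $A$-module, the associated idempotent being $\bar\pi_{ij}=\xi_i(\omega_j)=\sum_k\bsi_{ik}(\pi_{jk})$.

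The hard part will be precisely the right $A$-linearity of the functionals $\xi_i$: their building blocks $\eta_k$ are only \emph{left} $A$-linear, and bridging left and right linearity is exactly what forces one to route the computation through the twisted right action and to invoke the full strength of Definition~\ref{def.proj.system} — in particular that $\bsi$ is an algebra map and that it satisfies $\bsi\bullet\tsi^T=\1$. A secondary difficulty, present throughout, is bookkeeping: because $\{\omega_i\}$ is a generating set subject to relations rather than a basis, every coefficient manipulation must in the end be legitimised as an identity of module elements, which is why I would consistently fall back on the injective embedding $e$.
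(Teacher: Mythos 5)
Your plan for part (2) is sound and essentially reproduces the paper's own argument. Your derivation of \eqref{rel.dif.bar} is the paper's computation read backwards: the paper writes $a\omega_i=\sum_j a\pi_{ij}\omega_j=\sum_{j,k}\sigma_{kj}\left(\bsi_{ki}(a)\right)\omega_j=\sum_j\omega_j\bsi_{ji}(a)$, using only the dual-basis expansion and $\sigma^T\bullet\bsi=\pi$ --- note that $\pi^2=\pi$ is not needed, and neither is the detour through the embedding $e$: every step in this chain is already an equality of elements of $M$, so no illegitimate comparison of coefficients ever arises (your caution here is reasonable but the safeguard is unnecessary). Your functionals $\xi_i=\sum_k\bsi_{ik}\circ\eta_k$ are exactly the paper's maps \eqref{xi}, and the reconstruction $m=\sum_i\omega_i\xi_i(m)$ is proved identically. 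Your right-linearity check is a mild, legitimate variant: you distribute $\bsi_{il}$ over the triple product $\eta_j(m)\tsi_{jk}(a)\pi_{kl}$ and contract twice, via $\sum_l\bsi_{sl}(\pi_{kl})=\delta_{sk}$ (which is $\bsi\bullet\tsi^T=\1$ evaluated at $1$ using $\tsi(1)=\pi$; be careful that the relation $\bsi\bullet\tsi^T=\1$ itself is a hypothesis of the projective system, not something ``obtained from $\tsi(1)=\pi$'' as you write) and then $\sum_k\bsi_{rk}\left(\tsi_{jk}(a)\right)=\delta_{rj}a$. The paper instead first commutes $\tsi(a)\pi=\pi\tsi(a)$ (equations \eqref{pi} via Lemma~\ref{lem.proj.system}) and absorbs $\pi$ into the dual basis before invoking multiplicativity of $\bsi$; both routes are correct, and yours is marginally shorter. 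Incidentally, your own two identities combine to give $\bar\pi_{ij}=\xi_i(\omega_j)=\sum_k\bsi_{ik}(\pi_{jk})=\delta_{ij}$, so your closing remark about ``the associated idempotent'' is redundant: the dual basis you construct exhibits $M$ as free of rank $n$ on the right, stronger than the stated projectivity.

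The genuine gap is coverage: the statement is the whole of Proposition~\ref{prop.bimod}, and you prove only part (2), explicitly importing ``the right action \eqref{right} and the relations \eqref{eq.dif.rel}'' as if part (1) were already available. Part (1) carries real content and occupies half the paper's proof: one must verify that \eqref{right} is unital (the normalisation $\tsi(1)=\pi$ together with the dual-basis property), associative (expanding $\eta_k\left(\left(ma\right)\right)$ by left linearity of the $\eta_k$ and then using $\tsi(b)\tsi(a)\pi=\tsi(ba)\pi$, i.e.\ \eqref{tsi.alg} from Lemma~\ref{lem.proj.system}), left $A$-linear so that $M$ is a bimodule, and that the $\tsi$- and $\sigma$-forms in \eqref{right} and \eqref{eq.dif.rel} agree (the second of \eqref{pro3} combined with $\omega_i=\sum_j\pi_{ij}\omega_j$). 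Since your part-(2) computations invoke \eqref{right}, \eqref{eq.dif.rel} and associativity of the right action at several points, the proposal as written is incomplete until these verifications are supplied.
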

\begin{proof}
(1) The unitality of action \eqref{right} follows by the normalisation of $\tsi$ (i.e.\ by the second of equations \eqref{tsi}) and by the dual basis property. The associativity is checked by the following calculation
\begin{eqnarray*}
(ma)b & = & \sum_{i,j,k,l} \eta_k\left(\eta_i(m) \tsi_{ij}(a) \omega_j\right)\tsi_{kl}(b)\omega_l \\
&=& \sum_{i,j,k,l} \eta_i(m) \tsi_{ij}(a) \pi_{jk}\tsi_{kl}(b)\omega_l 
= \sum_{i,k} \eta_i(m) \tsi_{ik}(ab)\omega_k = m(ab).
\end{eqnarray*}
The second equality is a consequence of left $A$-linearity of the $\eta_i$, and the third one follows by  \eqref{tsi.alg}.
Therefore, the right action \eqref{right} is associative. It is also left $A$-linear since all the $\eta_i$ are left $A$-linear. The relations \eqref{eq.dif.rel} are an immediate consequence of the definition of a right $A$-multiplication,  \eqref{pi} and the dual basis property. The second equalities in \eqref{right} and \eqref{eq.dif.rel} follow by the second of equalities \eqref{pro3} and the dual basis property which in particular affirms that $\omega_i = \sum_j \pi_{ij}\omega_j$.

(2) In view of the dual basis property, the second of equations \eqref{bar.tsi} and relations  \eqref{eq.dif.rel} one can compute, for all $a\in A$,
$$
a\omega_i= \sum_j a\pi_{ij}\omega_j = \sum_{j,k} \sigma_{kj}\left(\bsi_{ki}(a)\right)\omega_j =
 \sum_{j}  \omega_j\bsi_{ji}(a).
 $$
 This proves the identities \eqref{rel.dif.bar}.

Define $\k$-linear maps $\xi_i : M\to A$ by
\begin{equation}\label{xi}
\xi _i = \sum_j \bsi_{ij}\circ \eta_j, \qquad i=1,2,\ldots, n.
\end{equation}
Then, for all $m\in M$,
\[
\sum_i \omega_i\xi_i(m) = \sum_{i,k} \omega_i\bsi_{ik}(\eta_k(m))
= \sum_k \eta_k(m) \omega_k = m,
\]
where the second equality follows by \eqref{rel.dif.bar}.
 This proves that $M$ is generated by the $\omega_i$ as a right $A$-module. 

Next, for all $a\in A$, $m\in M$ and $i=1,2,\ldots, n$,
\begin{eqnarray*}
\xi_i(ma) &=& \sum_{j,k,l} \bsi_{il}\left(\eta_l\left(\eta_j(m)\tsi_{jk}(a)\omega_k\right)\right)  = \sum_{j,k,l} \bsi_{il}\left(\eta_j(m)\tsi_{jk}(a)\pi_{kl}\right)\\
&=& \sum_{j,k,l} \bsi_{il}\left(\eta_j(m)\pi_{jk}\tsi_{kl}(a)\right) = \sum_{k,l} \bsi_{il}\left(\eta_k(m)\tsi_{kl}(a)\right)\\
& = & \sum_{k,l,r} \bsi_{ir}\left(\eta_k(m)\right)\bsi_{rl}\left(\tsi_{kl}(a)\right) = \xi_i(m)a.
\end{eqnarray*}
The second equality follows by the left $A$-linearity of the $\eta_l$ (combined with the definition of the idempotent $\pi$). The third equality is a consequence of equations \eqref{pi}, while the fourth one is the dual basis property. To derive the last two equalities the fact that $\bsi$ is an algebra map and the first of equations \eqref{bar.tsi} were used. This proves that $\omega_i$, $\xi_i$ form a dual basis for the right $A$-module $M$.
\end{proof}

\begin{corollary}\label{cor.dif}
In the setup of Proposition~\ref{prop.bimod}(1) assume that there is a right twisted multiderivation $(\partial: A\to A^n,\sigma)$. Then $\oan 1 = M$ is a first order differential calculus on $A$ with exterior differential defined by the formula \eqref{eq.dif.d}. 
\end{corollary}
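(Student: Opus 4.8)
The plan is to recognise Corollary~\ref{cor.dif} as a direct application of Lemma~\ref{lem.dif.calc}, with essentially all of the work already carried out in the preceding results; the only task is to check that the hypotheses of that lemma are precisely what Proposition~\ref{prop.bimod}(1) delivers. First I would invoke Proposition~\ref{prop.bimod}(1), which under the standing assumptions---a finitely generated projective left $A$-module $M$ with dual basis $\omega_i,\eta_i$ giving the idempotent $\pi=(\eta_j(\omega_i))$, together with a pre-projective system $(\pi,\sigma,\tsi)$---endows $M$ with an $A$-bimodule structure whose right action is given by \eqref{right}.

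The key observation is that this bimodule structure comes equipped with the commutation relations \eqref{eq.dif.rel}, whose first equalities read $\omega_i a = \sum_j \sigma_{ij}(a)\omega_j$ for all $a\in A$ and $i=1,\ldots,n$. These are exactly the relations \eqref{eq.dif.rel0} required as input to Lemma~\ref{lem.dif.calc}, involving the very same matrix entries $\sigma_{ij}$ that appear in the given right twisted multi-derivation $(\partial,\sigma)$. Hence the data consisting of the bimodule $M$, the distinguished elements $\omega_i$, and the pair $(\partial,\sigma)$ satisfy every hypothesis of Lemma~\ref{lem.dif.calc}.

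I would then simply apply Lemma~\ref{lem.dif.calc}: it yields that $\oan 1 = M$, together with $d$ defined by \eqref{eq.dif.d}, namely $da = \sum_i \partial_i(a)\omega_i$, is a first order differential calculus on $A$, i.e.\ that $d$ is an $\oan 1$-valued derivation. No separate verification of the Leibniz rule is needed here, since it has already been established inside Lemma~\ref{lem.dif.calc} from the twisted derivation property \eqref{eq.partial} of $\partial$ combined with the relations \eqref{eq.dif.rel0}.

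Because the argument is a straightforward chaining of the two prior results, there is no genuine obstacle. The single point deserving attention is the coherence of notation: the map $\sigma$ appearing in the pre-projective system---and hence in the right action \eqref{right} and the relations \eqref{eq.dif.rel}---must be identified with the $\sigma$ occurring in the twisted multi-derivation $(\partial,\sigma)$. This identification is forced by the statement itself, which deliberately uses the same symbol, so that the relations produced by Proposition~\ref{prop.bimod}(1) coincide exactly with the hypothesis \eqref{eq.dif.rel0} of Lemma~\ref{lem.dif.calc}, and the conclusion follows at once.
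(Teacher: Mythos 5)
Your proposal is correct and matches the paper's own proof, which likewise observes that the relations \eqref{eq.dif.rel} supplied by Proposition~\ref{prop.bimod}(1) are exactly the hypotheses \eqref{eq.dif.rel0} of Lemma~\ref{lem.dif.calc} and then applies that lemma. Your write-up merely spells out in more detail the same one-line chaining argument the paper gives.
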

\begin{proof}
 Since the dual basis elements $\omega_i$ satisfy equations \eqref{eq.dif.rel0}, Lemma~\ref{lem.dif.calc} yields the assertion. 
\end{proof}

\begin{lemma}\label{lem.dif.pi}
Let $(\partial: A\to A^n, \sigma)$ be a right twisted multi-derivation and assume that there exists $\pi\in M_n(A)$ and $\tsi: A\to M_n(A)$ satisfying equations \eqref{pi}. Then 
\[
\partial^\pi : A\to A^n, \qquad a\mapsto \partial(a)\pi,
\]
is a $\tsi$-twisted multi-derivation. Furthermore, in the situation 
of Proposition~\ref{prop.bimod}(1), the calculus $(M,d^\pi)$ induced by $(\partial^\pi, \tsi)$ by the procedure described in Lemma~\ref{lem.dif.calc} coincides with the calculus $(M,d)$ induced by $(\partial,\sigma)$.
\end{lemma}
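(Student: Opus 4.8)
The plan is to establish both assertions by direct computation, as each reduces to a single structural identity of the pre-projective system together with the dual basis property.

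First I would verify the twisted Leibniz rule for $\partial^\pi$. Writing $\partial^\pi(ab) = \partial(ab)\pi$ and expanding $\partial(ab)$ by the twisted derivation property \eqref{eq.partial} of $(\partial,\sigma)$, one gets
\[
\partial^\pi(ab) = \partial(a)\sigma(b)\pi + a\,\partial(b)\pi .
\]
The second summand is already $a\,\partial^\pi(b)$. For the first I would feed in equation \eqref{pi} in the form $\sigma(b)\pi = \pi\tsi(b)$ and exploit the right $M_n(A)$-module structure of $A^n$ to rewrite $\partial(a)\sigma(b)\pi = \partial(a)\pi\,\tsi(b) = \partial^\pi(a)\tsi(b)$, whence $\partial^\pi(ab) = \partial^\pi(a)\tsi(b) + a\,\partial^\pi(b)$. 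This is precisely the statement that $(\partial^\pi,\tsi)$ is a right twisted multi-derivation. The only thing to watch is that \eqref{pi} is an identity in $M_n(A)$ and so passes through the module action on $A^n$ without alteration.

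For the second assertion I would first observe that in the situation of Proposition~\ref{prop.bimod}(1) the bimodule relations \eqref{eq.dif.rel} hold simultaneously for $\sigma$ and $\tsi$, namely $\omega_i a = \sum_j \sigma_{ij}(a)\omega_j = \sum_j \tsi_{ij}(a)\omega_j$. Hence both $(\partial,\sigma)$ and $(\partial^\pi,\tsi)$ meet the hypotheses of Lemma~\ref{lem.dif.calc}, producing the calculi $(M,d)$ and $(M,d^\pi)$ with $da = \sum_i \partial_i(a)\omega_i$ and $d^\pi a = \sum_i \partial^\pi_i(a)\omega_i$ respectively. Writing the components of $\partial^\pi(a) = \partial(a)\pi$ as $\partial^\pi_i(a) = \sum_j \partial_j(a)\pi_{ji}$ gives
\[
d^\pi a = \sum_{i,j}\partial_j(a)\pi_{ji}\,\omega_i .
\]
Now I would apply the dual basis identity $\sum_i \pi_{ji}\omega_i = \omega_j$ (equivalently $\omega_j = \sum_i \eta_i(\omega_j)\omega_i$), already invoked in the proof of Proposition~\ref{prop.bimod}(1), to collapse the inner sum and conclude $d^\pi a = \sum_j \partial_j(a)\omega_j = da$, so that the two calculi coincide.

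There is no genuine obstacle here: both parts are bookkeeping. If anything merits attention it is purely notational, namely keeping the convention $\pi_{ij} = \eta_j(\omega_i)$ consistent so that the contraction against $\omega_i$ has the index placement $\sum_i \pi_{ji}\omega_i = \omega_j$ rather than $\sum_i \pi_{ij}\omega_i$; with that fixed, the computation closes immediately.
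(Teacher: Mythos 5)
Your proof is correct and takes essentially the same route as the paper's own (sketched) argument: the first part is exactly the intertwining computation $\partial(a)\sigma(b)\pi = \partial(a)\pi\,\tsi(b)$ via \eqref{pi}, and the second part uses \eqref{eq.dif.rel} to apply Lemma~\ref{lem.dif.calc} to $(\partial^\pi,\tsi)$ and then collapses $\sum_i \partial^\pi_i(a)\omega_i$ to $\sum_j \partial_j(a)\omega_j$ by the dual basis identity $\omega_j = \sum_i \pi_{ji}\omega_i$, which is precisely the paper's appeal to $\pi$ being the idempotent of the dual basis $\omega_i$, $\eta_i$. Your closing remark on index placement is consistent with the paper's convention $\pi_{ij} = \eta_j(\omega_i)$, so nothing further is needed.
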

\begin{proof}
Since $\pi$ intertwines $\sigma$ with $\tsi$ (i.e.\ equations \eqref{pi} are satisfied), the $\sigma$-twisted multi-derivation property of $\partial$ implies that $(\partial^\pi, \tsi)$ is a right twisted multi-derivation.  To prove the second statement, first observe that, in view of \eqref{eq.dif.rel}, 
the method described in Lemma~\ref{lem.dif.calc} can be applied also to $(\partial^\pi, \tsi)$. Now, use that $\pi$ is an idempotent corresponding to the dual basis $\omega_i, \eta_i$ for $M$ to find that $\sum_i \partial_i(a)\omega_i =\sum_i \partial^\pi_i(a)\omega_i$, as required.
\end{proof}

\begin{definition}\label{def.der.free}
Let $(\partial, \sigma)$ be a right twisted multi-derivation, where $\sigma$ is a part of a projective system $(\pi,\sigma,\tsi;\bsi)$. We say that $(\partial, \sigma)$ is {\em projectively free}, provided there exist a $\k$-linear map $\hsi: A \to M_n(A)$ such that
\begin{equation}\label{hat.sigma}
\hsi \bullet \bsi^T = \1, \qquad \bsi^T\bullet\hsi = \1\, ,
\end{equation}
in $M_n(\rend \k  A)$. 
The notation $(\partial: A\to A^n, \sigma; \tsi, \bsi ,\hsi;\pi)$ is used to record such a derivation. 

A projectively free right twisted multi-derivation $(\partial, \sigma; \tsi, \bsi ,\hsi;\pi)$ is said to be {\em free}   if $\pi = \1$ (and then, necessarily, $\tsi = \sigma$). This is recorded as $(\partial, \sigma; \bsi ,\hsi)$
\end{definition}
An easy exercise (left to the reader) allows one to establish the following

\begin{lemma}\label{lem.free} If $(\partial: A\to A^n, \sigma; \tsi, \bsi ,\hsi;\pi)$ is a projectively free right twisted multi-derivation, then $\hsi$ is an algebra map.
\end{lemma}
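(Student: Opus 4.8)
The plan is to unwind the two displayed identities in \eqref{hat.sigma} into index form and then prove multiplicativity of $\hsi$ by a cancellation argument whose only substantial ingredient is the hypothesis that $\bsi$ is an algebra map (which is part of $(\pi,\sigma,\tsi;\bsi)$ being a projective system).

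First I would spell out \eqref{hat.sigma}. Evaluating $\hsi\bullet\bsi^T=\1$ and $\bsi^T\bullet\hsi=\1$ on an arbitrary $a\in A$ and reading off matrix entries (using that $\bullet$ is matrix multiplication with composition of endomorphisms and that $(\bsi^T)_{ij}=\bsi_{ji}$) produces two contraction identities which I record as
\[
\sum_k \hsi_{kj}\big(\bsi_{ki}(a)\big) = \delta_{ij}\,a, \qquad \sum_k \bsi_{jk}\big(\hsi_{ik}(a)\big) = \delta_{ij}\,a .
\]
These simply say that $\hsi$ is a two-sided $\bullet$-inverse of $\bsi^T$. Putting $a=1$ in the first identity and using $\bsi_{ki}(1)=\delta_{ki}$ (as $\bsi$ is unital) collapses the sum to $\hsi_{ij}(1)=\delta_{ij}$, i.e. $\hsi(1)=\1$; this settles the unit condition.

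For multiplicativity I would compare $\hsi_{iq}(ab)$ with $\sum_l \hsi_{il}(a)\hsi_{lq}(b)$ after pushing both through $\bsi$. Contracting the former against $\bsi_{pq}$ over $q$ and using the second identity above gives $\sum_q \bsi_{pq}\big(\hsi_{iq}(ab)\big)=\delta_{ip}\,ab$. For the latter the essential step is that, \emph{because $\bsi$ is an algebra map}, each entry satisfies $\bsi_{pq}(xy)=\sum_r \bsi_{pr}(x)\bsi_{rq}(y)$, so $\bsi_{pq}$ distributes over the product $\hsi_{il}(a)\hsi_{lq}(b)$; applying this and then the second contraction identity twice (first to collapse the $q$-sum against $\hsi_{lq}(b)$, then to collapse the resulting $l$-sum against $\hsi_{il}(a)$) yields $\sum_q \bsi_{pq}\big(\sum_l \hsi_{il}(a)\hsi_{lq}(b)\big)=\delta_{ip}\,ab$ as well. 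Hence, writing $V_{iq}:=\hsi_{iq}(ab)-\sum_l \hsi_{il}(a)\hsi_{lq}(b)$, one has $\sum_q \bsi_{pq}(V_{iq})=0$ for all $p,i$.

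Finally I would remove the leading $\bsi$ using the \emph{other} inverse relation: applying $\hsi_{pr}$ to $\sum_q \bsi_{pq}(V_{iq})=0$, summing over $p$, and invoking the first contraction identity in the form $\sum_p \hsi_{pr}\big(\bsi_{pq}(c)\big)=\delta_{rq}\,c$ collapses the left-hand side to $V_{ir}$, whence $V_{ir}=0$. This is precisely $\hsi_{ir}(ab)=\sum_l \hsi_{il}(a)\hsi_{lr}(b)$, so $\hsi$ is multiplicative, and together with $\hsi(1)=\1$ it is an algebra map. I expect the only genuine point—everything else being bookkeeping that encodes $\hsi=(\bsi^T)^{-1}$—to be the middle step: it is exactly the algebra-map property of $\bsi$ that lets its entries split a product of two $\hsi$-values, and without it the two sandwiched expressions would fail to agree.
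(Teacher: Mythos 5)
The paper gives no proof of Lemma~\ref{lem.free} at all --- it is explicitly ``an easy exercise left to the reader'' --- so there is no in-paper argument to compare yours against. Judged on its own, your proof is correct and is surely the intended one: form the defect $V_{iq}=\hsi_{iq}(ab)-\sum_l\hsi_{il}(a)\hsi_{lq}(b)$, contract it against $\bsi$, use the multiplicativity of $\bsi$ (correctly identified as the only substantial input, coming from the projective-system hypothesis) together with one of the inverse identities applied twice to show the contraction vanishes, then strip the $\bsi$ with the other inverse identity; the unit condition $\hsi(1)=\1$ from $\bsi(1)=\1$ is also right.

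One bookkeeping caveat. Your index form of \eqref{hat.sigma} reads $\bullet$ with the opposite composition order from the one the paper uses elsewhere: in the proofs of Lemma~\ref{lem.dual} and Lemma~\ref{lem.sigma}, $\bsi^T\bullet\hsi=\1$ is applied as $\sum_k \bsi_{ki}\bigl(\hsi_{kj}(a)\bigr)=\delta_{ij}a$ and $\hsi\bullet\bsi^T=\1$ as $\sum_k \hsi_{ik}\bigl(\bsi_{jk}(a)\bigr)=\delta_{ij}a$, whereas your two displayed identities are the transposed contractions, amounting to $\hsi^T\bullet\bsi=\1=\bsi\bullet\hsi^T$ rather than $\hsi\bullet\bsi^T=\1=\bsi^T\bullet\hsi$; in the noncommutative ring $M_n(\rend \k A)$ these are not formally the same hypotheses. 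This does not break your argument, because it is symmetric under the flip: with the paper's reading you contract $V$ over its \emph{first} index, obtaining $\sum_k\bsi_{ki}(V_{kj})=0$ via $\sum_k \bsi_{kr}\bigl(\hsi_{kl}(a)\bigr)=\delta_{rl}a$ used twice, and then strip with $\sum_i\hsi_{si}\bigl(\bsi_{ki}(c)\bigr)=\delta_{sk}c$; the identical three steps go through verbatim. (The paper's own conventions are not applied with perfect consistency either --- compare how $\sigma^T\bullet\bsi=\pi$ is unwound in the proof of Proposition~\ref{prop.bimod}(2) --- so your habit of recording the contraction explicitly is good; just align it with the usage in Lemmas~\ref{lem.dual} and~\ref{lem.sigma}.)
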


In the case of a projectively free multiderivation, more can be said about the bimodule structure of a projective module induced by $\pi$.

\begin{lemma}\label{lem.dual}
In the setup of Proposition~\ref{prop.bimod}(2) assume that there exists an algebra map $\hsi: A\to M_n(A)$ that satisfies the second of equations \eqref{hat.sigma}. Then, for all $f\in \rhom AMA$,
\begin{equation}\label{dual}
f = \sum_{i,k} \xi_i \hsi_{ik}\left(f\left(\omega_k\right)\right),
\end{equation} 
where the $\xi_i$ are defined by equations \eqref{xi} in the proof of Proposition~\ref{prop.bimod}(2).
\end{lemma}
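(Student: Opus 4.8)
The plan is to read both sides of \eqref{dual} as elements of $\rhom AMA$ and to verify their equality by evaluating on an arbitrary $m\in M$. Here the right $A$-module structure on $\rhom AMA$ is the one from the definition of a divergence, so that the product $\xi_i\hsi_{ik}(f(\omega_k))$ denotes the right $A$-module map $m\mapsto \xi_i\big(\hsi_{ik}(f(\omega_k))\,m\big)$; being a finite sum of such maps, the right-hand side indeed lies in $\rhom AMA$. Since, by Proposition~\ref{prop.bimod}(2), the $\omega_i,\xi_i$ form a dual basis for the right $A$-module $M$, every $f\in\rhom AMA$ satisfies the expansion $f(m)=\sum_k f(\omega_k)\xi_k(m)$, obtained by applying the right $A$-linear map $f$ to $m=\sum_k\omega_k\xi_k(m)$. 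Thus it suffices to match the right-hand side of \eqref{dual} against this expansion.

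The key step is the intermediate identity
\begin{equation*}
\sum_i \xi_i\big(\hsi_{ik}(b)\,m\big)=b\,\xi_k(m),\qquad\text{for all }b\in A,\ m\in M,\ k=1,\dots,n.
\end{equation*}
To prove it, I would substitute the definition $\xi_i=\sum_p\bsi_{ip}\circ\eta_p$ from \eqref{xi}, use the left $A$-linearity of each $\eta_p$ to pull $\hsi_{ik}(b)$ out as $\eta_p(\hsi_{ik}(b)m)=\hsi_{ik}(b)\eta_p(m)$, and then expand $\bsi_{ip}\big(\hsi_{ik}(b)\eta_p(m)\big)=\sum_q\bsi_{iq}(\hsi_{ik}(b))\,\bsi_{qp}(\eta_p(m))$ using that $\bsi$ is an algebra map. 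Summing over $i$ and invoking the second of equations \eqref{hat.sigma}, namely $\sum_i\bsi_{iq}\circ\hsi_{ik}=\delta_{qk}\,\id$ (which is exactly $\bsi^T\bullet\hsi=\1$ read off entrywise), collapses the $q$-sum to $q=k$ and leaves $b\sum_p\bsi_{kp}(\eta_p(m))=b\,\xi_k(m)$, as claimed.

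Finally, applying this identity with $b=f(\omega_k)$ and summing over $k$ gives, for every $m\in M$,
\begin{equation*}
\sum_{i,k}\xi_i\big(\hsi_{ik}(f(\omega_k))\,m\big)=\sum_k f(\omega_k)\,\xi_k(m)=f(m),
\end{equation*}
which is \eqref{dual}. The main obstacle is not the computation, which is routine once set up, but keeping the bookkeeping straight: one must interpret the product in \eqref{dual} via the correct (right) module structure on $\rhom AMA$, and must apply the relation $\bsi^T\bullet\hsi=\1$ with the summation index placed on the first slot of both $\bsi$ and $\hsi$ --- it is precisely this placement, produced by the algebra-map expansion of $\bsi$, that makes the relation applicable. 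Note that neither the first of \eqref{hat.sigma} nor the algebra-map property of $\hsi$ enters this argument.
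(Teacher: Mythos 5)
Your proof is correct and follows essentially the same route as the paper's: the paper verifies \eqref{dual} on an arbitrary $m\in M$ in a single chain of equalities using exactly your ingredients (the definition \eqref{xi} of the $\xi_i$, left $A$-linearity of the $\eta_l$, the algebra-map property of $\bsi$, the relation $\bsi^T\bullet\hsi=\1$ applied with the summation over the first indices of both maps, right $A$-linearity of $f$, and the right dual basis property), your only cosmetic difference being that you isolate the intermediate identity $\sum_i\xi_i\left(\hsi_{ik}(b)\,m\right)=b\,\xi_k(m)$ and invoke the expansion $f(m)=\sum_k f(\omega_k)\xi_k(m)$ at the end, rather than folding everything into one computation. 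Your closing observation is also accurate: only the multiplicativity of $\bsi$ is actually used (the paper's prose attributing the third step to ``the multiplicativity of $\hsi$'' is a slip of notation), and neither the first of equations \eqref{hat.sigma} nor the algebra-map property of $\hsi$ enters the argument.
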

\begin{proof}
For all $m\in M$,
\begin{eqnarray*}
\sum_{i,k} \xi_i \hsi_{ik}\left(f\left(\omega_k\right)\right)(m) &=& \sum_{i,k,l} \bsi_{il}\left(\eta_l\left(\hsi_{ik}\left(f\left(\omega_k\right)\right)m\right)\right) = \sum_{i,k,l} \bsi_{il}\left(\hsi_{ik}\left(f\left(\omega_k\right)\right)\eta_l\left(m\right)\right)\\
&=& \sum_{i,k,l,r} \bsi_{ir}\left(\hsi_{ik}\left(f\left(\omega_k\right)\right)\right)\bsi_{rl}\left(\eta_l\left(m\right)\right) = \sum_{k,l} f\left(\omega_k\right)\bsi_{kl}\left(\eta_l\left(m\right)\right)\\
&=& \sum_{k,l} f\left(\omega_k \bsi_{kl}\left(\eta_l\left(m\right)\right)\right) = \sum_{k} f\left(\omega_k \xi_{k}\left( m\right)\right) = f(m).
\end{eqnarray*}
The first equality records the definitions of the $\xi_i$ and the right $A$-action on the dual module $\rhom AMA$, while the second equality uses the left $A$-linearity of the $\eta_l$. The multiplicativity of $\hsi$ is used in the third step followed by the second of equations \eqref{hat.sigma} in the fourth one. The fifth equality follows by the right $A$-linearity of $f$. Finally, the definition of the $\xi_k$ and the fact that $\omega_k$, $\xi_k$ form a dual basis for the right $A$-module $M$ are used in the derivation of the last two equalities.
\end{proof}

\begin{theorem}\label{thm.hom-der}
Let $(\partial: A\to A^n, \sigma; \tsi, \bsi ,\hsi;\pi)$  be a projectively free right twisted multi-derivation on $A$, and let $\oan 1=M$ be the associated first order differential calculus built on a finitely generated and projective left $A$-module $M$ with dual basis $\omega_i$, $\eta_i$ as described in Corollary~\ref{cor.dif}.  For each $i=1,2,\ldots, n$, write $\partial_i^\sigma := \sum_{j,\,k} \bsi_{kj}\circ\partial^\pi_j\circ \hsi_{ki}$, where the $\partial_j^\pi$ are described in Lemma~\ref{lem.dif.pi}, and define
\begin{equation}\label{def.hom.twist}
\nabla : \rhom A {\oan 1} A\to A, \qquad f\mapsto \sum_i \partial_i^\sigma \left( f\left(\omega_i\right)\right)\, .
\end{equation}
Then $\nabla$ is a divergence on $A$. 
\end{theorem}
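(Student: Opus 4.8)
The map $\nabla$ is $\k$-linear by construction, so everything reduces to the Leibniz-type identity $\nabla(fa)=\nabla(f)a+f(da)$ for all $f\in\rhom A {\oan 1} A$ and $a\in A$. The plan is to expand both sides over the dual-basis indices and match them. First I would use the definition of $fa$ to write $\nabla(fa)=\sum_i\partial_i^\sigma((fa)(\omega_i))=\sum_i\partial_i^\sigma(f(a\omega_i))$, then convert the left action into a right one by the relation \eqref{rel.dif.bar}, $a\omega_i=\sum_j\omega_j\bsi_{ji}(a)$, and use right $A$-linearity of $f$ to get $f(a\omega_i)=\sum_j f(\omega_j)\bsi_{ji}(a)$. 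Inserting $\partial_i^\sigma=\sum_{p,q}\bsi_{qp}\circ\partial^\pi_p\circ\hsi_{qi}$ then gives $\nabla(fa)=\sum_{i,j,p,q}\bsi_{qp}(\partial^\pi_p(\hsi_{qi}(f(\omega_j)\bsi_{ji}(a))))$.

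The decisive move is to break this innermost argument. Since $\hsi$ is an algebra map (Lemma~\ref{lem.free}), $\hsi_{qi}(f(\omega_j)\bsi_{ji}(a))=\sum_r\hsi_{qr}(f(\omega_j))\hsi_{ri}(\bsi_{ji}(a))$, and applying the $\tsi$-twisted Leibniz rule of $(\partial^\pi,\tsi)$ from Lemma~\ref{lem.dif.pi} splits $\nabla(fa)$ as $T_1+T_2$: in $T_1$ the derivation $\partial^\pi$ lands on the $f(\omega_j)$-factor, leaving a twist $\tsi_{sp}$ acting on the $a$-factor, while in $T_2$ it lands on the $a$-factor.

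I expect $T_2$ to reproduce $f(da)$ and $T_1$ to reproduce $\nabla(f)a$. In both terms the $a$-factor carries the only dependence on $i$, so the sum over $i$ can be pushed inside the relevant $\k$-linear map and collapsed by $\sum_i\hsi_{ri}\circ\bsi_{ji}=\delta_{rj}\id$, the first of \eqref{hat.sigma}. For $T_2$ this produces $\sum_{j,p,q}\bsi_{qp}(\hsi_{qj}(f(\omega_j))\partial^\pi_p(a))$; expanding $\bsi_{qp}$ by multiplicativity of $\bsi$ and collapsing the resulting sum over $q$ with $\sum_q\bsi_{qt}\circ\hsi_{qj}=\delta_{tj}\id$, the second of \eqref{hat.sigma}, yields $\sum_{i,j}f(\omega_j)\bsi_{ji}(\partial^\pi_i(a))$, which is exactly $f(da)$ once $da=\sum_i\partial^\pi_i(a)\omega_i$ (Lemma~\ref{lem.dif.pi}) is rewritten with \eqref{rel.dif.bar}. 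For $T_1$ the $i$-collapse leaves a factor $\tsi_{sp}(a)$; after expanding $\bsi_{qp}$ multiplicatively, the sum over $p$ is $\sum_p\bsi_{tp}\circ\tsi_{sp}=\delta_{ts}\id$, which is the first of \eqref{bar.tsi}, $\bsi\bullet\tsi^T=\1$. This unwinds the twist and peels a single factor $a$ off to the right, reassembling $\sum_{j,q,s}\bsi_{qs}(\partial^\pi_s(\hsi_{qj}(f(\omega_j))))\,a=\nabla(f)a$.

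The routine part of all this is the index bookkeeping; the one genuinely delicate point is $T_1$, where the twist $\tsi_{sp}(a)$ generated by the twisted Leibniz rule must be removed to expose a plain right multiplication by $a$. The key observation is that it is precisely the defining relation $\bsi\bullet\tsi^T=\1$ of the projective system---not the idempotent $\pi$ nor any property of $\hsi$---that unwinds this twist, so that the collapses rely exactly on the defining identities \eqref{bar.tsi} of the projective system and \eqref{hat.sigma} of projective freeness. This is also where projective freeness is indispensable: the existence of an algebra map $\hsi$ satisfying \eqref{hat.sigma} is what makes both the $i$-collapse and the final $q$-collapse available and lets the twisted multi-derivation property of $(\partial^\pi,\tsi)$ be applied in the right place.
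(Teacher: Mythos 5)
Your proposal is correct and takes essentially the same route as the paper: the paper isolates as Lemma~\ref{lem.sigma} the statement that $(\partial^\sigma,\hsi)$ is a right twisted multi-derivation---proved with exactly your ingredients (multiplicativity of $\hsi$ and $\bsi$, the $\tsi$-twisted Leibniz rule for $\partial^\pi$, the first of \eqref{bar.tsi} and the second of \eqref{hat.sigma})---and then combines it with \eqref{rel.dif.bar}, right $A$-linearity of $f$ and the first of \eqref{hat.sigma} to split $\nabla(fa)$ into $\nabla(f)a+f(da)$, just as your $T_1$ and $T_2$ do. Your version merely inlines the proof of that lemma into the main computation, so the two arguments coincide up to this modularization (which the paper reuses elsewhere, e.g.\ in the proof of Theorem~\ref{thm.int.part}).
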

\begin{proof} 
First we need the following
\begin{lemma}\label{lem.sigma}
$(\partial^\sigma,\hsi)$ is a right twisted multi-derivation on $A$.
\end{lemma}
\begin{proof}
For all $a,b\in A$,
\begin{eqnarray*}
\partial^\sigma_i (ab) &=& \sum_{j,k} \bsi_{kj}\left(\partial_j^\pi\left(\hsi_{ki}(ab)\right)\right) =
\sum_{j,k,l} \bsi_{kj}\left(\partial_j^\pi\left(\hsi_{kl}(a)\hsi_{li}(b)\right)\right) \\
&\!\!\!\!\!\!=&\!\!\!\!\!\! \sum_{j,k,l, r} \bsi_{kj}\left(\partial_r^\pi\left(\hsi_{kl}(a)\right)\tsi_{rj}\left(\hsi_{li}(b)\right)\right)  + \sum_{j,k,l} \bsi_{kj}\left(\hsi_{kl}(a)\partial_j^\pi\left(\hsi_{li}(b)\right)\right)
\\
&\!\!\!\!\!\!=&\!\!\!\!\!\! \sum_{j,k,l, r,s} \bsi_{ks}\left(\partial_r^\pi\left(\hsi_{kl}(a)\right)\right)\bsi_{sj}\left(\tsi_{rj}\left(\hsi_{li}(b)\right)\right)  + \sum_{j,k,l,s} \bsi_{ks}\left(\hsi_{kl}(a)\right)\bsi_{sj}\left(\partial_j^\pi\left(\hsi_{li}(b)\right)\right)\\
&\!\!\!\!\!\!=&\!\!\!\!\!\! \sum_{k,l, r} \bsi_{kr}\left(\partial_r^\pi\left(\hsi_{kl}(a)\right)\right)\hsi_{li}(b)  + \sum_{j,l} a\bsi_{lj}\left(\partial_j^\pi\left(\hsi_{li}(b)\right)\right) = \sum_{l} \partial^\sigma_l(a)\hsi_{li}(b)  + a\partial^\sigma_i(b).
\end{eqnarray*}
The second and fourth equalities follow by the multiplicativity of $\hsi$ and $\bsi$ respectively. The third equality is a consequence of the twisted derivation property of $\partial^\pi$; see Lemma~\ref{lem.dif.pi}. The first of equations \eqref{bar.tsi} and the second of \eqref{hat.sigma} lead to the fifth equality. The remainder is the definition of $\partial^\sigma$.
\end{proof}

With this lemma at hand, one can verify the Leibniz rule for $\nabla$ as follows. Take any $f\in \rhom A {\oan 1} A$ and $a\in A$ and compute
\begin{eqnarray*}
\nabla(fa) &=& \sum_i \partial_i^\sigma \left( fa\left(\omega_i\right)\right) =\sum_i \partial_i^\sigma \left( f\left(a\omega_i\right)\right)
= \sum_{i,j} \partial_i^\sigma \left( f(\omega_j)\bsi_{ji}(a)\right)\\
&=& \sum_{i,j,k} \partial_k^\sigma \left( f(\omega_j)\right)\hsi_{ki}\left(\bsi_{ji}(a)\right) +
 \sum_{i,j} f(\omega_j)\partial_i^\sigma \left( \bsi_{ji}(a)\right)\\
 &=&  \sum_i \partial_i^\sigma \left( f\left(\omega_i\right)\right) a + \sum_{i,j} f\left(\omega_j\bsi_{ji}\left(\partial^\pi_i\left(a\right)\right)\right)\\
 &=& \nabla(f)a + \sum_i f\left(\partial_i^\pi(a)\omega_i\right) 
 = \nabla(f)a + f(da),
\end{eqnarray*}
where the second equality is a consequence of the definition of the right action of $A$ on $f$. The third equality follows by \eqref{rel.dif.bar} and the  fact that $f$ is a right $A$-module map. 
The fourth equality is a consequence of Lemma~\ref{lem.sigma} and the next one uses the first of equations \eqref{hat.sigma}. The sixth equality follows by \eqref{rel.dif.bar}. The final equality follows by Lemma~\ref{lem.dif.pi}.
%
\end{proof}

If $A$ is an algebra of functions on the Euclidean space $\R^n$, $\partial_i$ are the standard partial derivatives and $\oan 1$ is the standard module of one-forms, then $\rhom A {\oan 1} A$ are simply vector fields and the formula \eqref{def.hom.twist} gives the classical divergence of the elementary vector calculus. This provides one with further justification of the name chosen for the abstract operation $\nabla$.

\begin{definition}\label{def.integral}
Let $\nabla$ be a divergence constructed in equation \eqref{def.hom.twist} in Theorem~\ref{thm.hom-der}. The cokernel map $\Lambda : A\to \coker \nabla$ is called the {\em integral on $A$ relative to $(\partial,  \sigma; \tsi, \bsi ,\hsi; \pi)$}, where $\pi$ is the idempotent matrix $\pi_{ij} = \eta_j(\omega_i)$.
\end{definition}

\begin{theorem}[Integration by parts]\label{thm.int.part}
Let $\Lambda$ be an integral on $A$ relative to a free multi-derivation $(\partial, \sigma; \bsi ,\hsi)$. Then, for all $i=1,2,\ldots, n$ and $a,b\in A$,
\begin{equation}\label{int.part}
\Lambda \left(a\partial^\sigma_i(b)\right) = - \sum_{l} \Lambda \left(\partial^\sigma_l(a)\hsi_{li}(b)\right).
\end{equation}
\end{theorem}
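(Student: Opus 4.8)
The plan is to exploit that $\Lambda$, being the cokernel map of $\nabla$, annihilates $\im\nabla$; hence the whole statement follows once both sides of \eqref{int.part} are recognised as $\Lambda$ applied to an element of $\im\nabla=\ker\Lambda$. The key preparatory observation is that, by Lemma~\ref{lem.sigma}, the pair $(\partial^\sigma,\hsi)$ is a right twisted multi-derivation, so that for all $a,b\in A$ and each $i$,
\[
\partial_i^\sigma(ab)=\sum_l\partial_l^\sigma(a)\hsi_{li}(b)+a\,\partial_i^\sigma(b).
\]
Applying the $\k$-linear map $\Lambda$ to this identity, the desired formula \eqref{int.part} is seen to be equivalent to the single vanishing statement $\Lambda\bigl(\partial_i^\sigma(ab)\bigr)=0$. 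It therefore suffices to prove the stronger fact that $\partial_i^\sigma(c)\in\im\nabla$ for every $c\in A$ and every $i$.

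To produce such a preimage I would use freeness directly. Since the multi-derivation is free, $\pi=\1$, and the right dual basis of Proposition~\ref{prop.bimod}(2) satisfies
\[
\xi_j(\omega_k)=\sum_l\bsi_{jl}\bigl(\eta_l(\omega_k)\bigr)=\bsi_{jk}(1)=\delta_{jk},
\]
where the last equality uses that the algebra map $\bsi$ is unital. For fixed $i$ and $c$, define the map $f_{i,c}:M\to A$ by $m\mapsto c\,\xi_i(m)$; it is right $A$-linear because $\xi_i$ is right $A$-linear and left multiplication by $c$ commutes with the right action. Evaluating the divergence \eqref{def.hom.twist} on it gives
\[
\nabla(f_{i,c})=\sum_j\partial_j^\sigma\bigl(c\,\xi_i(\omega_j)\bigr)=\sum_j\partial_j^\sigma(c\,\delta_{ij})=\partial_i^\sigma(c),
\]
so that $\partial_i^\sigma(c)=\nabla(f_{i,c})\in\im\nabla$, as required. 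Feeding this back into the reduction with $c=ab$ yields $0=\sum_l\Lambda\bigl(\partial_l^\sigma(a)\hsi_{li}(b)\bigr)+\Lambda\bigl(a\,\partial_i^\sigma(b)\bigr)$, which rearranges to \eqref{int.part}.

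I expect the only delicate point to be the evaluation $\nabla(f_{i,c})=\partial_i^\sigma(c)$, which rests entirely on the clean identity $\xi_i(\omega_j)=\delta_{ij}$. This is exactly where the hypothesis $\pi=\1$ is used: in the merely projective setting of Definition~\ref{def.integral} one has only $\xi_i(\omega_j)=\sum_k\bsi_{ik}(\pi_{jk})$, and then the sum $\sum_j\partial_j^\sigma\bigl(c\,\xi_i(\omega_j)\bigr)$ no longer collapses to $\partial_i^\sigma(c)$. Thus the argument genuinely requires freeness, which presumably explains why the integration by parts formula is stated only for free multi-derivations rather than for arbitrary projective systems.
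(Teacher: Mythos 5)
Your proposal is correct and follows essentially the same route as the paper: there, too, one shows $\Lambda\left(\partial_i^\sigma(a)\right)=0$ by computing $\nabla(a\xi_i)=\sum_j\partial_j^\sigma\left(a\xi_i(\omega_j)\right)=\partial_i^\sigma(a)$, using $\xi_i(\omega_j)=\delta_{ij}$ in the free case, and then applies Lemma~\ref{lem.sigma} to the identity $\Lambda\left(\partial_i^\sigma(ab)\right)=0$. Your explicit check that $\xi_i(\omega_j)=\delta_{ij}$ via unitality of the algebra map $\bsi$, and your remark on where freeness is genuinely needed, merely spell out details the paper leaves implicit.
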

\begin{proof} 
Since it is assumed that $\pi_{ij} = \delta_{ij}$ the module $M = \oan 1$ is free (both as a left and right $A$-module) and the right $A$-module maps $\xi_i$ given by equations \eqref{xi} satisfy $\xi_i(\omega_j) = \delta_{ij}$.  For all $i=1,2,\ldots, n$ and $a\in A$,
\[
\nabla (a\xi_i) = \sum_j \partial_j^\sigma\left( a\xi_i\left(\omega_j\right)\right) = \partial_i^\sigma (a),
\]
which implies that $\Lambda (\partial_i^\sigma (a)) =0$, for all $a\in A$. Now apply Lemma~\ref{lem.sigma}  to the equality $\Lambda (\partial_i^\sigma (ab)) =0$.
\end{proof}

\section{Examples}
\setcounter{equation}{0}
\subsection{Inner calculi} In this section we construct an example of inner differential calculus based on twisted multi-derivations.
\begin{lemma}\label{lem.in1}
Let $\sigma: A \to M_n(A)$ be an algebra map. Fix $\delta = (\delta_1,\delta_2,\ldots, \delta_n) \in A^n$ and define
\begin{equation}\label{part.in}
\partial_i : A\to A, \qquad a\mapsto \sum_j \delta_j\sigma_{ji}(a) -a \delta_i, \qquad i = 1,2,\ldots, n,
\end{equation}
(i.e.\ $\partial : A\to A^n$, $\partial(a) = \delta\sigma(a) - a\delta$). Then $(\partial,\sigma)$ is a right twisted multi-derivation. 
\end{lemma}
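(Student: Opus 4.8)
The plan is to verify the defining identity \eqref{eq.partial}, namely $\partial(ab) = \partial(a)\sigma(b) + a\partial(b)$, by direct substitution, working throughout in the compact vector notation $\partial(a) = \delta\sigma(a) - a\delta$ rather than in components. Here $\delta\in A^n$ is read as a row vector on which $\sigma(a)\in M_n(A)$ acts from the right, in accordance with the $(A$-$M_n(A))$-bimodule structure on $A^n$ fixed in the text, so that all the products written below are legitimate.

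First I would expand the left-hand side. Since $\sigma$ is an algebra map we have $\sigma(ab)=\sigma(a)\sigma(b)$, and hence
\[
\partial(ab) = \delta\sigma(ab) - ab\delta = \delta\sigma(a)\sigma(b) - ab\delta.
\]
This is the only place where the hypothesis that $\sigma$ is multiplicative is used.

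Next I would expand the right-hand side using the definition of $\partial$ twice:
\[
\partial(a)\sigma(b) + a\partial(b) = \bigl(\delta\sigma(a) - a\delta\bigr)\sigma(b) + a\bigl(\delta\sigma(b) - b\delta\bigr) = \delta\sigma(a)\sigma(b) - a\delta\sigma(b) + a\delta\sigma(b) - ab\delta.
\]
The two middle terms $-a\delta\sigma(b)$ and $+a\delta\sigma(b)$ cancel, leaving exactly $\delta\sigma(a)\sigma(b) - ab\delta$, which coincides with the left-hand side computed above. Reading off the $i$-th entry of this vector identity recovers the $n$ scalar equations $\partial_i(ab)=\sum_j\partial_j(a)\sigma_{ji}(b)+a\partial_i(b)$ required of a right twisted multi-derivation.

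There is no genuine obstacle here: the assertion is an identity and the proof reduces to a one-line cancellation once the algebra-map property splits $\sigma(ab)$. The only points demanding mild care are the bookkeeping—keeping $\delta$ consistently on the left and remembering that $\sigma(a)$ multiplies the row vector $\delta$ from the right—and the observation that this is precisely the $\sigma$-twisted analogue of the classical inner derivation $a\mapsto \delta a - a\delta$, to which it reduces when $n=1$ and $\sigma=\id$.
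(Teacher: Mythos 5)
Your proof is correct and is essentially the same as the paper's: both verify \eqref{eq.partial} by expanding in the vector notation $\partial(a)=\delta\sigma(a)-a\delta$, cancelling the two middle terms $\mp a\delta\sigma(b)$, and invoking the multiplicativity of $\sigma$ to combine $\delta\sigma(a)\sigma(b)=\delta\sigma(ab)$. The paper's version is a single displayed line starting from the right-hand side, but the computation is identical.
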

\begin{proof}
For all $a,b\in A$,
\[
\partial(a)\sigma(b) + a\partial(b) = \delta\sigma(a)\sigma(b) - a\delta\sigma(b) + a\delta\sigma(b) - ab\delta = \delta\sigma(ab) - ab\delta = \partial(ab),
\]
since $\sigma$ is an algebra map.
\end{proof}
\begin{lemma}\label{lem.in2}
Let $M$  be as in Proposition~\ref{prop.bimod}(1). The calculus with $\oan 1=M$ and $d$ as in Corollary~\ref{cor.dif} associated to the right twisted multi-derivation $(\partial,\sigma)$ of Lemma~\ref{lem.in1} is inner.
\end{lemma}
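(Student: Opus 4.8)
The plan is to exhibit an explicit one-form that implements $d$ as a commutator. Recall that a first order differential calculus $(\oan 1, d)$ is called \emph{inner} if there is an element $\theta \in \oan 1$ with $da = \theta a - a\theta$ for every $a \in A$. Since the twisted multi-derivation of Lemma~\ref{lem.in1} is built from the fixed vector $\delta = (\delta_1,\ldots,\delta_n) \in A^n$, the natural candidate is $\theta := \sum_i \delta_i \omega_i \in M = \oan 1$, where the $\omega_i$ are the dual basis elements of Proposition~\ref{prop.bimod}(1). I would then verify the commutator identity by direct computation.

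For the computation I would expand both sides using the data already in place. On the one hand, by the definition of $d$ in Corollary~\ref{cor.dif} together with the explicit formula \eqref{part.in} for $\partial_i$, one has $da = \sum_i \partial_i(a)\omega_i = \sum_{i,j}\delta_j\sigma_{ji}(a)\omega_i - \sum_i a\delta_i\omega_i$. On the other hand, the left action gives $a\theta = \sum_i (a\delta_i)\omega_i$ at once, which already matches the second (negative) summand. For the right action I would use that $M$ is a bimodule, so that $\theta a = \sum_i \delta_i(\omega_i a)$, and then feed in the defining relation \eqref{eq.dif.rel}, namely $\omega_i a = \sum_j \sigma_{ij}(a)\omega_j$, to obtain $\theta a = \sum_{i,j}\delta_i\sigma_{ij}(a)\omega_j$. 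After relabelling indices this is exactly the first summand of $da$, whence $da = \theta a - a\theta$ and the calculus is inner.

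The calculation is essentially routine, so the only point requiring genuine care --- and what I regard as the main obstacle --- is the handling of the right action $\theta a$. One must not compute it naively as if $M$ were free; it is forced to be the right multiplication coming from the pre-projective system $(\pi,\sigma,\tsi)$ of Proposition~\ref{prop.bimod}(1), which is precisely what makes relation \eqref{eq.dif.rel} available. Once it is recognised that $\delta_i \omega_i$ is a left multiple and that left and right actions commute in the bimodule $M$, the substitution $\omega_i a = \sum_j \sigma_{ij}(a)\omega_j$ is legitimate and the two $\sigma$-terms match exactly. It is worth noting that the argument never needs $\bsi$ or $\hsi$: innerness is a feature of the bimodule relation alone, so the statement holds already at the level of Proposition~\ref{prop.bimod}(1), as asserted.
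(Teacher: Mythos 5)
Your proof is correct and is essentially identical to the paper's: the paper also sets $D=\sum_i\delta_i\omega_i$ and verifies $da=[D,a]$ by expanding $d$ via \eqref{part.in} and rewriting the $\sigma$-term using the bimodule relation \eqref{eq.dif.rel}. Your closing observations --- that the left $A$-linearity of the right action from Proposition~\ref{prop.bimod}(1) is what licenses the step $\theta a=\sum_i\delta_i(\omega_i a)$, and that neither $\bsi$ nor $\hsi$ is needed --- are accurate and consistent with the paper's hypotheses.
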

\begin{proof}
Let $D= \sum_i\delta_i\omega_i$, where $\omega_i$ are the generators of the left $A$-module $M$. In view of the relations \eqref{eq.dif.rel}, one can compute
$$
da = \sum_{i,j}  \delta_j\sigma_{ji}(a)\omega_i -\sum_i a \delta_i\omega_i = \sum_i\delta_i\omega_i a - a\sum_i\delta_i\omega_i = [D,a],
$$
hence the calculus is inner as required.
\end{proof} 
\begin{lemma}
Let $\Lambda$ be the integral on $A$ relative to the free multi-derivation $(\partial, \sigma; \bsi ,\hsi)$, where $(\partial,\sigma)$ is as in Lemma~\ref{lem.in1}. Then, for all $a\in A$, $i=1,2,\ldots, n$,
$$
\Lambda\left(\sum_{k,l}\bsi_{kl}(\delta_l)\hsi_{ki}(a)\right) = \Lambda\left(\sum_l a\bsi_{il}(\delta_l)\right).
$$
\end{lemma}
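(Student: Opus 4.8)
The plan is to recognise that the single element $\partial^\sigma_i(a)$ is, up to sign, exactly the difference of the two expressions in the statement, and then to invoke the fact established \emph{inside} the proof of Theorem~\ref{thm.int.part} that $\Lambda\circ\partial^\sigma_i=0$ for every $i$. Since the multi-derivation is free, $\pi=\1$ and $\tsi=\sigma$, so that $\partial^\pi_j=\partial_j$ (the $\pi$-twisting in Lemma~\ref{lem.dif.pi} is trivial) and the twisting map in Lemma~\ref{lem.sigma} is $\hsi$. Substituting the inner form $\partial_j(c)=\sum_m\delta_m\sigma_{mj}(c)-c\delta_j$ from Lemma~\ref{lem.in1} into the definition $\partial^\sigma_i=\sum_{j,k}\bsi_{kj}\circ\partial_j\circ\hsi_{ki}$ from Theorem~\ref{thm.hom-der} should collapse the whole statement to a purely algebraic identity for $\partial^\sigma_i(a)$.

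First I would write out
\[
\partial^\sigma_i(a)=\sum_{j,k,m}\bsi_{kj}\big(\delta_m\sigma_{mj}(\hsi_{ki}(a))\big)-\sum_{j,k}\bsi_{kj}\big(\hsi_{ki}(a)\delta_j\big),
\]
and expand each $\bsi_{kj}$ of a product using the multiplicativity of the algebra map $\bsi$. In the first sum the inner contraction $\sum_j\bsi_{rj}(\sigma_{mj}(\,\cdot\,))=\delta_{rm}(\,\cdot\,)$ follows from the first of equations \eqref{bar.tsi} (i.e.\ $\bsi\bullet\sigma^T=\1$), leaving $\sum_{k,l}\bsi_{kl}(\delta_l)\hsi_{ki}(a)$. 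In the second sum the contraction $\sum_k\bsi_{kr}(\hsi_{ki}(\,\cdot\,))=\delta_{ri}(\,\cdot\,)$ follows from the second of equations \eqref{hat.sigma} (i.e.\ $\bsi^T\bullet\hsi=\1$), leaving $\sum_l a\,\bsi_{il}(\delta_l)$. This should yield the key identity
\[
\partial^\sigma_i(a)=\sum_{k,l}\bsi_{kl}(\delta_l)\hsi_{ki}(a)-\sum_l a\,\bsi_{il}(\delta_l).
\]

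To finish, I would apply $\Lambda$ to both sides and use $\Lambda(\partial^\sigma_i(c))=0$ for all $c\in A$, which was derived in the proof of Theorem~\ref{thm.int.part} from the observation that $\partial^\sigma_i(c)=\nabla(c\xi_i)$ lies in the image of $\nabla$ and hence in $\ker\Lambda$. By $\k$-linearity of $\Lambda$ this gives $\Lambda\big(\sum_{k,l}\bsi_{kl}(\delta_l)\hsi_{ki}(a)\big)=\Lambda\big(\sum_l a\,\bsi_{il}(\delta_l)\big)$, which is the assertion.

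The only delicate point is the index bookkeeping in the two contractions: one must apply the multiplicativity of $\bsi$ in the correct slot \emph{before} contracting, and correctly pair each sum with the right defining relation ($\bsi\bullet\sigma^T=\1$ for the first, $\bsi^T\bullet\hsi=\1$ for the second). No genuine obstacle is expected beyond keeping the composition convention for $\bullet$ consistent with the computations in the proofs of Proposition~\ref{prop.bimod} and Lemma~\ref{lem.sigma}; in particular the deduction never requires the auxiliary relation $\sigma^T\bullet\bsi=\pi$, so any index ambiguity there is irrelevant here.
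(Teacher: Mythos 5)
Your proposal is correct and follows essentially the same route as the paper: you expand $\partial^\sigma_i(a)$ using the inner form of $\partial$, the multiplicativity of $\bsi$, and the contractions $\bsi\bullet\tsi^T=\1$ (with $\tsi=\sigma$ in the free case) and $\bsi^T\bullet\hsi=\1$, arriving at exactly the paper's key identity $\partial^\sigma_i(a)=\sum_{k,l}\bsi_{kl}(\delta_l)\hsi_{ki}(a)-\sum_l a\bsi_{il}(\delta_l)$, and then invoke $\Lambda\circ\partial^\sigma_i=0$ from the proof of Theorem~\ref{thm.int.part}. Your index bookkeeping in both contractions matches the paper's computation, and your closing observation that $\sigma^T\bullet\bsi=\pi$ is never needed is consistent with the paper's citation of only \eqref{bar.tsi} and \eqref{hat.sigma}.
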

\begin{proof}
For all $a\in A$, compute
\begin{eqnarray*}
\partial^\sigma_i(a) &=& \sum_{j,k,l} \bsi_{kj}\left(\delta_l \sigma_{lj}\left(\hsi_{ki}(a)\right)\right) - \sum_{j,k} \bsi_{kj}\left(\hsi_{ki}(a)\delta_j \right)\\
&=& \sum_{j,k,l,r} \bsi_{kr}\left(\delta_l \right)\bsi_{rj}\left(\sigma_{lj}\left(\hsi_{ki}(a)\right)\right) - \sum_{j,k,r} \bsi_{kr}\left(\hsi_{ki}(a)\right) \bsi_{rj}\left(\delta_j \right)\\
&=& \sum_{k,l}\bsi_{kl}(\delta_l)\hsi_{ki}(a) - \sum_l a\bsi_{il}(\delta_l).
\end{eqnarray*}
The second equality follows by the multiplicativity of $\bsi$, while the third one is a consequence of equations \eqref{bar.tsi} and \eqref{hat.sigma}. By the same arguments as in the proof of Theorem~\ref{thm.int.part}, $\Lambda(\partial^\sigma_i(a)) =0$, and the assertion follows.
\end{proof}

\subsection{Right integrals on Hopf algebras}
As explained in \cite[Section~4]{BrzElK:int} a free right twisted multi-derivation $(\partial, \sigma; \bsi ,\hsi)$ leading to an integral $\Lambda$ can be associated to any left covariant differential calculus on a quantum group or Hopf algebra $A$ with a bijective antipode $S$. Let $\Delta$ be the comultiplication  and $\eps$ the counit on $A$. Following \cite{Wor:dif}, a left-covariant differential calculus on $A$ is determined by $\k $-linear maps $\theta_{ij}, \chi_i: A\to \k $, $i,j=1,2,\ldots ,n$, which satisfy the following relations, for all $a,b\in A$,
\begin{equation} \label{eq.theta}
\theta_{ij}(ab) = \sum _k \theta_{ik}(a)\theta_{kj}(b), \qquad \theta_{ij}(1) = \delta_{ij},
\end{equation}
\begin{equation}\label{eq.chi}
\chi_i(ab) = \sum _j \chi_j(a) \theta_{ji}(b) + \eps(a)\chi_i(b).
\end{equation}
The dual space $A^* = \rhom \k  A \k $ is an algebra with convolution product  $f*g = (f\ot g)\circ \Delta$, which acts on $A$ from the left by $f\la a = (\id\ot f)(\Delta(a))$.  The datum \eqref{eq.theta}--\eqref{eq.chi} gives rise to the following free right twisted multi-derivation on $A$
\begin{equation}\label{qg.der}
\partial_i (a) = \chi_i \la a, \quad \sigma_{ij} (a) = \theta_{ij}\la a, \quad \bsi_{ij}(a) = (\theta_{ji}\circ S^{-1})\la a, \quad  \hsi_{ij}(a) = (\theta_{ij}\circ S^{-2})\la a.
\end{equation}
The module of one forms $\oan 1$ of a left-covariant differential calculus is free both as a left and right $A$-module (this reflects the fact that any Lie group is a parallelisable manifold), hence the idempotent matrix $\pi$ is trivial and Theorem~\ref{thm.int.part} yields
\begin{lemma}\label{lem.inv.calc}
Let $\theta_{ij}$, $\chi_i$, $i=1,2,\ldots ,n$ be linear maps satisfying equations \eqref{eq.theta}--\eqref{eq.chi}, and let $(\partial, \sigma; \bsi ,\hsi)$ be a free right-twisted multiderivation on $A$ given by equations \eqref{qg.der}. Denote by $\Lambda$  the  integral on $A$ relative to $(\partial, \sigma; \bsi ,\hsi)$. Then
\begin{equation}\label{lin.lambda}
\Lambda  \left(\left(\chi_i\circ S^{-2}\right) \la a\right) = \Lambda \left(S^2\left( \partial_i \left(a\right)\right)\right)= 0 ,
\end{equation}
for all $a\in A$ and $i=1,2,\ldots, n$. Consequently, if $\lambda: A\to \k$ is a right integral on a Hopf algebra $A$, then
\begin{equation}\label{rint}
\lambda \left(\left(\chi_i\circ S^{-2}\right) \la a\right) = \lambda \left(S^2\left( \partial_i \left(a\right)\right)\right)= 0 .
\end{equation}
\end{lemma}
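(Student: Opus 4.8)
The plan is to reduce everything to the single identity
\[
\partial_i^\sigma(a) = (\chi_i\circ S^{-2})\la a \qquad (a\in A),
\]
after which both displayed equalities drop out of Theorem~\ref{thm.int.part}. Indeed, that theorem gives $\Lambda(\partial_i^\sigma(a))=0$ for all $a$, so the identity above immediately yields $\Lambda((\chi_i\circ S^{-2})\la a)=0$. For the second term I would first record the elementary fact that, since $S^2$ is a coalgebra map ($\Delta\circ S^2=(S^2\ot S^2)\circ\Delta$) and $S^{-2}S^2=\id$, one has $S^2(\partial_i(a))=S^2(a_{(1)})\chi_i(a_{(2)})=(\chi_i\circ S^{-2})\la S^2(a)=\partial_i^\sigma(S^2(a))$; hence $\Lambda(S^2(\partial_i(a)))=\Lambda(\partial_i^\sigma(S^2(a)))=0$ as well. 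This proves \eqref{lin.lambda}.

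To establish the key identity I would compute $\partial_i^\sigma$ explicitly. Since the calculus is free, $\pi=\1$, so $\partial_j^\pi=\partial_j$ and $\partial_i^\sigma=\sum_{j,k}\bsi_{kj}\circ\partial_j\circ\hsi_{ki}$ by Theorem~\ref{thm.hom-der}. By \eqref{qg.der} each of $\partial_j$, $\bsi_{kj}$, $\hsi_{ki}$ acts as $(-)\la$ by a functional in $A^*$, and the left action obeys $g\la(f\la a)=(g*f)\la a$ for the convolution product. Collapsing the nested actions then gives $\partial_i^\sigma(a)=P_i\la a$ with
\[
P_i=\sum_{j,k}(\theta_{jk}\circ S^{-1})*\chi_j*(\theta_{ki}\circ S^{-2})\in A^*,
\]
so the whole lemma rests on proving $P_i=\chi_i\circ S^{-2}$.

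The main obstacle is this last computation, which I would carry out on the iterated coproduct $a_{(1)}\ot a_{(2)}\ot a_{(3)}$, where $P_i(a)=\sum_{j,k}\theta_{jk}(S^{-1}(a_{(1)}))\chi_j(a_{(2)})\theta_{ki}(S^{-2}(a_{(3)}))$. The cocycle \eqref{eq.chi}, in the form $\sum_j\chi_j(x)\theta_{jk}(y)=\chi_k(xy)-\eps(x)\chi_k(y)$, applied to the $j$-summation produces two terms. In the first the factor $\sum a_{(2)}S^{-1}(a_{(1)})$ collapses to a counit by the antipode identity for $S^{-1}$, leaving a multiple of $\chi_k(1)$; and $\chi_k(1)=0$ because evaluating \eqref{eq.chi} at $a=b=1$ forces $\chi_i(1)=2\chi_i(1)$. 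The surviving term is $-\sum_k\chi_k(S^{-1}(a_{(1)}))\theta_{ki}(S^{-2}(a_{(2)}))$; applying \eqref{eq.chi} a second time, using the antipode identity $\sum S^{-1}(a_{(1)})S^{-2}(a_{(2)})=\eps(a)1$ (which again discards a $\chi_i(1)$-term) together with $\eps\circ S^{-1}=\eps$, collapses this to $\chi_i(S^{-2}(a))$. Thus $P_i=\chi_i\circ S^{-2}$. The delicate points are the correct bookkeeping of the two applications of the cocycle and the choice of antipode identities so that the unwanted contributions vanish through $\chi_i(1)=0$.

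Finally, for \eqref{rint} I would argue directly from the defining property of a right integral, $\sum\lambda(a_{(1)})a_{(2)}=\lambda(a)1$. Then
\[
\lambda\big((\chi_i\circ S^{-2})\la a\big)=\sum\lambda(a_{(1)})\chi_i(S^{-2}(a_{(2)}))=\chi_i\Big(S^{-2}\big(\textstyle\sum\lambda(a_{(1)})a_{(2)}\big)\Big)=\lambda(a)\chi_i(1)=0,
\]
using $S^{-2}(1)=1$ and $\chi_i(1)=0$; the $S^2$-version follows by applying this to $S^2(a)$ in place of $a$ and invoking $S^2(\partial_i(a))=(\chi_i\circ S^{-2})\la S^2(a)$ from the first paragraph. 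Note that this last part needs neither the identification of $\partial_i^\sigma$ nor Theorem~\ref{thm.int.part}, only $\chi_i(1)=0$ and right-invariance of $\lambda$.
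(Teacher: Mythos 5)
Your proposal is correct, and it is worth comparing with the paper's proof, which is much terser: for the key identity $\partial_i^\sigma(a)=(\chi_i\circ S^{-2})\la a$ the paper simply cites ``the same arguments as in the proof of [BrzElK:int, Theorem~4.1]'', whereas you prove it from scratch via the convolution formula $\partial_i^\sigma = \sum_{j,k}(\theta_{jk}\circ S^{-1})*\chi_j*(\theta_{ki}\circ S^{-2})$ and two applications of the cocycle condition \eqref{eq.chi}; I checked your bookkeeping (the identities $a\sw{2}S^{-1}(a\sw{1})=\eps(a)1$ and $S^{-1}(a\sw{1})S^{-2}(a\sw{2})=\eps(a)1$, the vanishing $\chi_i(1)=0$ extracted from \eqref{eq.chi} at $a=b=1$, and the sign cancellation in the second cocycle step) and it is sound, so your argument is a self-contained replacement for the citation. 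For \eqref{rint} you take a genuinely different route: the paper invokes the second part of [BrzElK:int, Theorem~4.1], that every right integral factors through the cokernel map $\Lambda$, so that \eqref{rint} is inherited from \eqref{lin.lambda}; you instead compute directly from right-invariance, $\lambda(a\sw{1})\,a\sw{2}=\lambda(a)1$, together with $\chi_i(1)=0$, which makes \eqref{rint} independent of \eqref{lin.lambda} and of the factorization result altogether --- a more elementary argument, at the cost of not displaying \eqref{rint} as a specialization of the universal statement. Two cosmetic points: the vanishing $\Lambda(\partial_i^\sigma(a))=0$ comes from the \emph{proof} of Theorem~\ref{thm.int.part} rather than its statement (the paper itself writes ``(the proof of)''; alternatively it follows from \eqref{int.part} at $a=1$ once one notes $\partial_l^\sigma(1)=0$), and your reduction of the $S^2$-term via $\Delta\circ S^2=(S^2\ot S^2)\circ\Delta$, giving $S^2(\partial_i(a))=(\chi_i\circ S^{-2})\la S^2(a)$, is exactly the right way to see that both expressions in \eqref{lin.lambda} vanish, the asserted equality then being trivial.
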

\begin{proof}
By the same arguments as in the proof of \cite[Theorem~4.1]{BrzElK:int} one easily finds that $\partial_i^\sigma(a) = \left(\chi_i\circ S^{-2}\right) \la a$, and hence the first assertion follows by (the proof of) Theorem~\ref{thm.int.part}. Equations \eqref{rint} follow from \eqref{lin.lambda} and \cite[Theorem~4.1]{BrzElK:int}, whose second part asserts that any right integral on a Hopf algebra $A$ factors through $\Lambda$.
\end{proof}

\subsection{Integration on supermanifolds}
In this section we describe a baby example of a supermanifold, which makes explicit the connection between integrals defined in Defintion~\ref{def.integral} and the Berezin integral. Let $A$ be a superalgebra of (integrable) functions on the supercircle $S^{1\mid 1}$. That is, an element $a\in A$ is 
\begin{equation}\label{supercirc}
a(x,\theta) = a^0(x) + a^1(x)\theta,
\end{equation}
where $a^i: [0,1]\to \R$ are (integrable) functions such that $a^i(0) = a^i(1)$ and $\theta$ is a Grassmann variable, $\theta^2 =0$. The differentiation on $A$ is defined by
\begin{equation}\label{dif.circ}
\partial_xa(x,\theta) := \frac{da^0(x)}{dx} + \frac{da^1(x)}{dx} \theta, \qquad \partial_\theta a(x,\theta)  := a^1(x).
\end{equation}
One easily checks that $\partial = (\partial_x, \partial_\theta)$ is a right twisted multi-derivation with the diagonal $\sigma = \begin{pmatrix} \sigma_{xx} & 0 \cr 0 & \sigma_{\theta\theta} \end{pmatrix}$, where
\begin{equation}\label{sigma.circ}
\sigma_{xx}(a(x,\theta)) = a(x,\theta), \qquad \sigma_{\theta\theta}(a(x,\theta)) = a(x,-\theta).
\end{equation}
The twisted multi-derivation $(\partial,\sigma)$ is obviously free with $\bsi = \hsi = \sigma$.
\begin{lemma}\label{lem.super}
Let $\Lambda$ be the integral on $A$ relative to twisted multi-derivation \eqref{dif.circ}, \eqref{sigma.circ}. Then, for all $a(x,\theta) = a^0(x) + a^1(x)\theta$,
\begin{equation}\label{berezin}
\Lambda (a) = \int_0^1 a^1(x)dx,
\end{equation}
i.e.\ $\Lambda$ is the Berezin integral on the supercircle $S^{1\mid 1}$.
\end{lemma}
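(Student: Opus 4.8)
The plan is to compute the divergence $\nabla$ explicitly for this free multi-derivation and then identify its cokernel. Since $(\partial,\sigma)$ is free with $\pi=\1$ and $\bsi=\hsi=\sigma$, the module $\oan 1 = M$ is free on two generators $\omega_x,\omega_\theta$, both as a left and right $A$-module, and the key quantities $\partial^\sigma_i$ from Theorem~\ref{thm.hom-der} simplify. I would first compute $\partial^\sigma_x$ and $\partial^\sigma_\theta$ using the formula $\partial^\sigma_i = \sum_{j,k}\bsi_{kj}\circ\partial^\pi_j\circ\hsi_{ki}$; because $\sigma$ is diagonal and $\pi=\1$, this collapses to $\partial^\sigma_x = \sigma_{xx}\circ\partial_x\circ\sigma_{xx}$ and $\partial^\sigma_\theta = \sigma_{\theta\theta}\circ\partial_\theta\circ\sigma_{\theta\theta}$. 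Since $\sigma_{xx}=\id$ and $\partial_x$ commutes with the parity flip $\sigma_{\theta\theta}$, one finds $\partial^\sigma_x = \partial_x$, while $\partial^\sigma_\theta(a)(x,\theta) = \sigma_{\theta\theta}(\partial_\theta(\sigma_{\theta\theta}(a))) = \partial_\theta a = a^1(x)$ on the relevant component.

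\textbf{Identifying the cokernel.}
With these in hand, the divergence of a right module map $f$ with $f(\omega_x)=u$, $f(\omega_\theta)=v$ is $\nabla(f) = \partial^\sigma_x(u) + \partial^\sigma_\theta(v)$. To compute the cokernel $\Lambda: A\to A/\im\nabla$, I would determine exactly which elements of $A$ lie in the image. Writing a general $a = a^0(x) + a^1(x)\theta$, the first step is to show that $\partial^\sigma_x(u)$, as $u$ ranges over $A$, produces all functions of the form $\frac{d}{dx}(\text{periodic})$, and $\partial^\sigma_\theta(v)$ contributes the degree-zero part of $v$'s odd component. The crucial observation is that $\im\nabla$ should consist precisely of those $a(x,\theta)$ whose $\theta$-coefficient $a^1(x)$ integrates to zero over $[0,1]$, together with all purely even ($\theta$-independent) functions, so that the quotient map picks out exactly $\int_0^1 a^1(x)\,dx$.

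\textbf{The main obstacle.}
The technical heart is the analysis of the image of $\partial_x$ on periodic functions. On the even part, $\partial^\sigma_\theta$ kills nothing relevant while $\partial^\sigma_x$ applied to an arbitrary $u^0(x)$ yields $\frac{du^0}{dx}$; since $u^0$ is periodic, $\int_0^1\frac{du^0}{dx}\,dx=0$, so the even part of the image is exactly $\{g: \int_0^1 g=0\}$ — but in the quotient the even sector collapses completely once one also uses the freedom in $v$. The delicate point is verifying that the odd component $a^1(x)\theta$ lies in $\im\nabla$ if and only if $\int_0^1 a^1(x)\,dx = 0$: the ``only if'' requires showing that $\partial^\sigma_\theta$ and $\partial^\sigma_x$ cannot together produce an odd part with nonzero mean, which reduces to the standard fact that a periodic function has a periodic antiderivative precisely when its integral vanishes. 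Once this is established, the well-defined map $a\mapsto\int_0^1 a^1(x)\,dx$ descends to an isomorphism $\coker\nabla\cong\R$ and coincides with $\Lambda$, giving formula~\eqref{berezin}. I expect this periodicity/mean-zero argument to be the only nontrivial step; everything else is direct substitution into the definitions.
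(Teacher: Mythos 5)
Your proposal is correct and follows essentially the same route as the paper: compute $\nabla(f)$ explicitly on the free generators, absorb the even part of $A$ into $\im \nabla$ using the freedom in $f_\theta$, and characterise the odd part of the image as the mean-zero functions via the fact that a periodic function has a periodic antiderivative precisely when its integral vanishes, so that $\coker\nabla \cong \R$ is realised by the Berezin integral. One computational slip: $\partial^\sigma_\theta = \sigma_{\theta\theta}\circ\partial_\theta\circ\sigma_{\theta\theta} = -\partial_\theta$, not $+\partial_\theta$ (on $a = a^0 + a^1\theta$ the inner parity flip sends $a^1 \mapsto -a^1$ before $\partial_\theta$ extracts it, and the outer flip acts trivially on the even result), which is why the paper's formula reads $\nabla(f) = \partial_x f_x - \partial_\theta f_\theta$; the slip is harmless for your argument, since replacing $f_\theta$ by $-f_\theta$ shows the image of $\nabla$, and hence the cokernel, is unchanged.
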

\begin{proof}
This can be argued as follows. $\oan 1$ is a free module generated by $dx$ and $d\theta$ hence any $f \in \rhom A{\oan 1} A$ is fully determined by its values on $dx$ and $d\theta$, i.e.\
\[
f_x(x,\theta) := f(dx)(x,\theta) = f_x^0(x) + f_x^1(x)\theta, \qquad f_\theta(x,\theta) := f(d\theta)(x,\theta) = f_\theta^0(x) + f_\theta^1(x)\theta.
\]
The divergence $\nabla$ comes out as
\begin{equation}\label{div.super}
\nabla(f)(x,\theta) = \partial_x f_x(x,\theta) - \partial_\theta f_\theta(x,\theta) = \frac{df_x^0(x)}{dx} - f_\theta^1(x) + \frac{df_\theta^0(x)}{dx}\theta.
\end{equation}
If $a(x,\theta)$ is purely even, i.e.\ $a(x,\theta) = a(x)$, then define $f_x(x,\theta) = 0$,
$f_\theta(x,\theta) = -a(x)\theta$. Equation \eqref{div.super} yields,
\[
a(x) = \nabla(f).
\]
Thus, the integral $\Lambda$ vanishes on the even part of $A$, i.e.\
\begin{equation}\label{lamb.zero}
\Lambda(a(x,\theta)) = \Lambda(a^0(x) + a^1(x)\theta) = \Lambda( a^1(x)\theta) .
\end{equation}
Since $\Lambda$ is the cokernel of $\nabla$, for all $f\in \rhom A{\oan 1} A$,
\[
0 = \Lambda\circ \nabla(f) = \Lambda\left(\frac{df_x^0(x)}{dx} - f_\theta^1(x) + \frac{df_\theta^0(x)}{dx}\theta\right) = \Lambda\left(\frac{d}{dx}f_\theta^0(x)\theta\right).
\]
On the other hand,  $f_\theta^0(0) = f_\theta^0(1)$, so $\int_0^1 \frac{d}{dx}f_\theta^0(x)dx =0$. By the universality of $\Lambda$, $ \Lambda(a^1(x)\theta) = \int_0^1 a^1(x)dx$, and in view of \eqref{lamb.zero},
\[
 \Lambda(a^0(x) + a^1(x)\theta) = \int_0^1 a^1(x)dx,
\]
as required.
\end{proof}

This example can easily be extended to all supermanifolds (with a compact, oriented body) thus explaining how integrals relative to the natural differential structure on a supermanifold yield Berezin integrals. 

\section*{Acknowledgements} 
I am grateful to Edwin Beggs for asking a question that started this work and to Ulrich Kr\"ahmer for inspiring comments.


\begin{thebibliography}{99}{} 

\bibitem{BerLei:int} I.N. Bernshtein and D.A. Leites, {\em Integral forms and the Stoke formula for supermanifolds}, Funkcional. Anal. i Prilo\v zen.\ 11 (1977), 55--56. 

\bibitem{Brz:con} T.\ Brzezi\'nski, {\em Non-commutative connections of the second kind}, J.\ Algebra Appl.\ 7 (2008), 557--573.


\bibitem{BrzElK:int} T.\ Brzezi\'nski, L.\  El Kaoutit and C.\ Lomp, {Non-commutative integral forms and twisted multi-derivations}, {\em J.\ Noncommut.\ Geom.} {\bf 4} (2010), 281--312. 


\bibitem{Man:gau} Yu.I.\ Manin, {\em Gauge Field Theory and Complex Geometry}, Springer-Verlag, Berlin, 1988. 


\bibitem{Wor:dif} S.L.\ Woronowicz, 
{\em Differential calculus on compact matrix pseudogroups (quantum groups),}
{ Comm.\ Math.\ Phys.} 122 (1989),125--170. 

\end{thebibliography}
\end{document}